\documentclass[11pt,a4paper,fleqn]{scrartcl}    
\usepackage[english]{babel}                     
\usepackage[T1]{fontenc}                   
\usepackage[applemac]{inputenc}
\usepackage{graphicx}    
\usepackage{tikz}        
\usepackage{pgfplots}
\pgfplotsset{compat=1.15}
\usepackage{mathrsfs}
\usetikzlibrary{arrows}           %
\usepackage[font=small]{quoting}      
      %
\usepackage{enumitem,wrapfig}
\usepackage[colorlinks=true,linkcolor=blue]{hyperref}
\usepackage{xcolor}
\usepackage{pgf,tikz}
\usepackage{picture}
\usepackage{amsmath,amsthm,amsfonts,amssymb}


\newcommand{\diesis}{^\#}

\newtheorem{theorem}{Theorem}[section]
\newtheorem{lemma}{Lemma}[section]
\newtheorem{prop}{Proposition}[section]
\newtheorem{cor}{Corollary}[section]

\theoremstyle{definition}
\newtheorem{definiz}{Definition}[section]

\newtheorem{rem}{Remark}[section]

\newcommand{\ds}{\displaystyle}

\newcommand{\R}{\mathbb R}

\newcommand{\de}{\partial}
\newcommand{\eps}{\varepsilon}

\begin{document}
\title{On the behaviour of the first eigenvalue of the $p$-Laplacian with Robin boundary conditions as $p$ goes to $1$}
 \author{Francesco Della Pietra$^{*}$ \\ Carlo Nitsch$^{*,**}$\\ Francescantonio Oliva$^{*}$\\ Cristina Trombetti%
       \thanks{Universit\`a degli studi di Napoli Federico II, Dipartimento di Matematica e Applicazioni ``R. Caccioppoli'', Via Cintia, Monte S. Angelo - 80126 Napoli, Italia; 
       $^{**}$Scuola Superiore Meridionale, Universit\`a degli studi di Napoli Federico II, Largo San Marcellino 10, 80138 Napoli, Italy. 
       Email: f.dellapietra@unina.it, carlo.nitsch@unina.it, francescantonio.oliva@unina.it, cristina.trombetti@unina.it }}

%
%
%
%

%

\maketitle

\begin{abstract}
	\textbf{Abstract}. In this paper we study the $\Gamma$-limit, as $p\to 1$, of the functional
	 $$
 J_{p}(u)=\frac{\displaystyle\int_\Omega |\nabla u|^p + \beta\int_{
		\partial \Omega} |u|^p}{\displaystyle \int_\Omega |u|^p},
		$$
		 where  $\Omega$ is a smooth bounded open set in $\R^{N}$, $p>1$ and $\beta$ is a real number. Among our results, for $\beta >-1$, we derive an isoperimetric inequality for 
 		 \[
		 \Lambda(\Omega,\beta)=\inf_{u \in BV(\Omega), u\not \equiv 0}  \frac{\displaystyle |Du|(\Omega) + \min(\beta,1)\int_{
			\partial \Omega} |u|}{\displaystyle \int_\Omega |u|}
		 \]
		   which is the limit as $p\to 1^{+}$ of
$ \lambda(\Omega,p,\beta)= \ds \min_{u\in W^{1,p}(\Omega)} J_{p}(u).	 $ 
We show that among all bounded and smooth open sets with given volume, the ball maximizes $\Lambda(\Omega, \beta)$ when $\beta \in$ $(-1,0)$ and minimizes $\Lambda(\Omega, \beta)$ when $\beta \in[0, \infty)$.
		 
\end{abstract}

\begin{center}
\begin{minipage}{.8\textwidth}
\tableofcontents
\end{minipage}
\end{center}

\section{Introduction}
 In last years a great attention has been devoted to the study of isoperimetric inequalities inolving the following functional
 \begin{equation}
 \label{intro_pb}
 \lambda(\Omega,p,\beta)=
 \min_{u\in W^{1,p}(\Omega)}
 \frac{\displaystyle\int_\Omega |\nabla u|^p + \beta\int_{
		\partial \Omega} |u|^p}{\displaystyle \int_\Omega |u|^p},
 \end{equation}
 where  $\Omega$ is a smooth bounded open set in $\R^{N}$, $p>1$ and $\beta$ is a real number. It is clear that the minimizers $\varphi_{p}$ of \eqref{intro_pb} are solutions to the following Robin boundary value problem:
 \begin{equation*}
	\begin{cases}
		\displaystyle -\Delta_p \varphi_p = \lambda(\Omega,p,\beta) \left|\varphi_p\right|^{p-2}\varphi_p & \text{ in }\Omega,\\
		\displaystyle \left|\nabla \varphi_{p}\right|^{p-2}\frac{\partial \varphi_p}{\de \nu}  + \beta |\varphi_p|^{p-2}\varphi_p=0 & \text{ on } \partial\Omega.
	\end{cases}
\end{equation*}

\medskip

In the study of shape optimization problems for $\lambda(\Omega,p,\beta)$, there is a striking difference between cases $\beta<0$ and $\beta \ge 0$. We first aim to give an idea of the existing literature which is huge for these problems. If $\beta$ is nonnegative, it is widely known that the following Faber-Krahn inequality holds (see \cite{Bo2} for $p=N=2$, \cite{D1} for $p=2$ and $N\ge 2$, \cite{buda,daifu} for any $1<p<+\infty$; see also \cite{pota} for a more general nonlinear setting):
\begin{equation}\label{fkintro}
	\lambda(\Omega,p,\beta)\ge \lambda(\Omega\diesis,p,\beta),
\end{equation}
where $\Omega\diesis$ is the ball, centered at the origin, with the same volume of $\Omega$; namely the ball minimizes $\lambda$ in the class of sets with given volume.

\smallskip

On the contrary, when $\beta<0$ the situation becomes more delicate. Indeed, if $p=2$, it was conjectured in \cite{b} that the ball maximizes $\lambda(\Omega,2,\beta)$ among  smooth bounded domains $\Omega$ of fixed volume. In the same paper this property was initially proved when $N=2$, $|\beta|$ small enough and for \textit{nearly circular domains} which are, roughly speaking, infinitesimal perturbations of a circle.   Later, the conjecture has been disproved in \cite{fk}, in any dimension, for $|\beta|$ great enough and in case of $\Omega$ as a spherical shell. By the way, for bounded planar domains of class $C^2$, the authors show the existence of a value $\beta^*$ such that the ball maximizes the eigenvalue for any $\beta\in [\beta^*,0]$; this is mainly proved by a suitable asymptotic expansion in $\beta$ for the first eigenvalue. Among other things, the former result has been extended in \cite{kp} for any $N\ge 3$ and $p>1$ when $\Omega$ is any simply connected bounded $C^2$ domain once again for $|\beta|$ small enough. Moreover, it is worth mentioning that in \cite{fnt} the authors show the validity of \eqref{fkintro} for any given $\beta<0$ in the class of Lipschitz sets which are, in some sense, close to a ball in the Hausdorff sense.
On the other hand, for any $\beta \le 0$, $\lambda(\Omega,p,\beta)$ is maximized by the ball when $\Omega$ belongs to a suitable class of sets with the same perimeter. In particular, $C^{2}$ planar bounded domains (\cite{afk}) or convex bounded domains in $\R^{N}$
(\cite{bfnt}).
\medskip

In the current paper we deal with the case $p\to1$ in problem \eqref{intro_pb}; as one should expect here the natural associated space is $BV(\Omega)$. As we will see, $\lambda(\Omega,p,\beta)$ converges to
\begin{equation} \label{intro_pb1}
\textcolor{red}	{\Lambda(\Omega,\beta)}= \inf_{u \in BV(\Omega), u\not \equiv 0}  \frac{\displaystyle |Du|(\Omega) + \min(\beta,1)\int_{
			\partial \Omega} |u|}{\displaystyle \int_\Omega |u|}
\end{equation}
where $|Du|(\Omega)$ stands for the total variation of the distributional gradient of $u$ in $\Omega$.
We first prove that problem \eqref{intro_pb1} admits a minimum for any $\beta>-1$ (see Theorem \ref{teo_ex_p=1} below); this result is strengthened by the fact that in case $\beta<-1$ the infimum of the associated functional in \eqref{intro_pb} is $-\infty$.  Latter assertion, roughly speaking, depends on the fact that, while the denominator may tend to zero, the numerator remains strictly negative (see Remark \ref{rem_reg} for the precise computations). However the main result concerns the validity of an isoperimetric inequality for $\Lambda(\Omega,\beta)$ when $\beta>-1$ (see Theorem \ref{teo_fabkra} below). In particular we show that, among all bounded, sufficiently smooth open sets with given volume,  the ball maximizes $\Lambda(\Omega,\beta)$ when $\beta \in ]-1,0]$  and it minimizes $\Lambda(\Omega,\beta)$ when  $\beta \ge 0$.

\medskip
 
The proof of Theorem \ref{teo_fabkra} follows as an application of Proposition \ref{prop_radial} in which the computation of $\Lambda(\Omega,\beta)$ is made explicit  when $\Omega$ is a ball. Here the proof's main ingredient is given by the $\Gamma$-convergence of $J_p$ towards $J$ (defined in \eqref{jp} and \eqref{j} below) and then the convergence of $\lambda(\Omega,p,\beta)$ to $\Lambda(\Omega,\beta)$ as $p\to 1^+$ (see Theorem \ref{gammaconvteo} and Corollary \ref{gammaconvcor} below).      

\medskip

The plan of the paper is the following: in Section \ref{s_firsteigen} we present the minimization problem for $p\ge 1$ while in Section \ref{s_Gamma} we deal with the $\Gamma$-convergence result. In Section \ref{s_FK} we state and prove the isoperimetric inequalities for $\Lambda(\Omega,\beta)$. We conclude with Section \ref{s_Cheeger} by showing a Cheeger type result. 
 
\section{Notations and preliminaries}
Throughout this paper $\Omega$ is a bounded open and connected set of $\mathbb{R}^N$ ($N\ge 2$) with sufficiently smooth boundary (see Remark \ref{rem_reg} below). For a given set $A$ we denote by $\chi_A$ its characteristic function, by $|A|$ its Lebesgue measure, and by $A^{\#}$ the ball centered at the origin such that $|A^{\#}|=|A|$. Moreover, $\mathcal{H}^{N-1}(E)$ will be the $(N-1)$-dimensional Hausdorff measure of a set $E$, and by $B_R$ the ball centered at the origin with radius $R$. 

Let us now recall some basic fact on functions of bounded variation. For a detailed treatment of the subject, we refer the reader, for example, to \cite{afp,eg}. The total variation in $\Omega$ of a function $u\in L^{1}(\Omega)$ is
\[
\left| Du\right|(\Omega) =\sup\left\{\int_{\Omega} u\textrm{div}\phi,\; \phi \in C^{1}_{0}(\Omega,\R^{N}),\; \|\phi\|_{L^{\infty}}\le 1\right\}.
\]
Then $u$ is a function of bounded variation in $\Omega$, and we write $u\in BV(\Omega)$, if $\left| Du\right|(\Omega)$ is finite. In this case, the distributional derivative of $u$ is a finite Radon measure in $\Omega$, that is
\[
\int_{\Omega} u\frac{\de \phi}{\de x_{i}} dx=-\int_{\Omega} \phi d D_{i}u,\quad \forall \phi \in C_{0}^{\infty}(\Omega),\quad i=1,\ldots,N,
\]
for some vector valued measure $Du=(D_{1}u,\ldots,D_{N}u)$.
 The set $BV(\Omega)$ is a Banach space if endowed with the norm
\[
\|u\|_{BV(\Omega)}=\|u\|_{L^{1}(\Omega)}+\left|D u\right|(\Omega).
\]
If the characteristic function $\chi_{E}$ of a set $E\subset\R^{N}$ has bounded variation, we say that $E$ has finite perimeter in $\Omega$, and denote the relative perimeter of $E$ as
\[ 
P_{\Omega}(E)= \left| D\chi_{E}\right|(\Omega).
\]
Finally, we will write $P(E)$ instead of $P_{\R^N}(E)$.

The coarea formula states that if $u\in BV(\Omega)$, then
\[
\left|D u\right|(\Omega)=\int_{-\infty}^{+\infty} P_{\Omega}(u>t)dt.
\]
We finally recall that a sequence $u_{n}$ in $BV(\Omega)$ weak$^{*}$ converges to $u\in BV(\Omega)$ if $u_{n}\to u$ in $L^{1}(\Omega)$ and
\[
\lim_{n\to +\infty}\int_{\Omega}\phi d D_{i} u_{n}=\int_{\Omega} \phi d D_{i}u,\quad \forall \phi \in C_{0}(\Omega),\quad i=1,\ldots,N.
\]

\medskip

Let us recall the definition of the Cheeger constant $h(\Omega)$ for $\Omega\subset\R^N$, which is given by
\begin{equation}\label{cheeger}
	h(\Omega):= \inf_{E\subseteq\Omega} \frac{P(E)}{|E|}=\inf_{u\in BV(\Omega)}\frac{|Du|(\R^{N})}{\int_{\Omega}|u|}.
\end{equation} 
It is well known that there is at least a minimum for problem \eqref{cheeger} when $\Omega$ has Lipschitz boundary. A set for which the minimum is attained is called Cheeger set for $\Omega$. Finally, for subsequent use, we recall that the ball is self-Cheeger, that is if $\Omega = B_R$ for some $R>0$ then the Cheeger set of $\Omega$ is $B_R$ itself and then 
\begin{equation}\label{cheeger2}
	h(B_R)=\frac{P(B_R)}{|B_R|}.
\end{equation}

\medskip

In what follows we will use a trace inequality for $BV$-functions: 
if $\Omega$ is a Lipschitz bounded open set, then 
\begin{equation}\label{trace}
	\int_{\partial\Omega} |v| \le c_1 |Dv| (\Omega)+ c_2\int_\Omega |v|,
\end{equation}
for any $v\in BV(\Omega)$, with $c_1=\sqrt{1+L^2}$ and $L$ is the Lipschitz constant of $\partial\Omega$; actually, if $\Omega$ has $C^1$ boundary then $c_1$ can be chosen as $1+\varepsilon$ for any $\varepsilon>0$, with $c_2$ depending on $\varepsilon$ as well. Furthermore, if $\partial\Omega$ has mean curvature bounded from above then we may choose $c_1=1$. We refer the reader to \cite{anzg,giusti}.
To conclude the section, we recall the following lower semicontinuity result.
\begin{prop}[\cite{modica}]
\label{modicone}
For any $\beta\ge-1$, the functional 
\[
F(u)= \left|D u\right|(\Omega)+ \min\{\beta,1\}\int_{\de \Omega} \left|u\right|
\]
is lower semicontinuous on $BV(\Omega)$ with respect to the topology of $L^{1}(\Omega)$.
\end{prop}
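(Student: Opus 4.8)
The plan is to split the analysis according to the sign of $\gamma:=\min\{\beta,1\}\in[-1,1]$, after isolating two building blocks that are manifestly lower semicontinuous. The first is the total variation $P_0(u):=|Du|(\Omega)$, which is $L^1(\Omega)$-lower semicontinuous directly from its definition as a supremum of the $L^1$-continuous functionals $u\mapsto\int_\Omega u\,\divergenza\phi$. The second is obtained from the extension-by-zero $\tilde u\in BV(\R^N)$: since $\partial\Omega$ is Lipschitz one has the identity $P_1(u):=|D\tilde u|(\R^N)=|Du|(\Omega)+\int_{\partial\Omega}|u|\,d\mathcal H^{N-1}$, and because $u_n\to u$ in $L^1(\Omega)$ forces $\tilde u_n\to\tilde u$ in $L^1(\R^N)$, lower semicontinuity of the total variation on $\R^N$ makes $P_1$ lower semicontinuous as well.

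For $\beta\ge 0$, i.e.\ $\gamma\in[0,1]$, the result is then immediate: I would write $F=(1-\gamma)P_0+\gamma P_1$, a combination with nonnegative coefficients of two lower semicontinuous functionals, hence lower semicontinuous. The genuinely delicate range is $\beta\in[-1,0)$, where $\gamma\in[-1,0)$ and this convex combination is no longer available, since a negative multiple of $P_1$ is upper, not lower, semicontinuous. Writing $F=(1-|\gamma|)P_0+|\gamma|\,G_-$ with $G_-(u):=|Du|(\Omega)-\int_{\partial\Omega}|u|$ and $1-|\gamma|\ge 0$, the whole matter reduces to proving that $G_-$ is $L^1(\Omega)$-lower semicontinuous. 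Here the obstruction is structural: $G_-$ \emph{subtracts} boundary mass, so it cannot be realized as the total variation of any extension, and the trace operator fails to be upper semicontinuous along $L^1$-convergent sequences, since mass may concentrate on $\partial\Omega$ from inside (take characteristic functions of shrinking inner collars, for which the trace does not vanish in the limit).

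The heart of the proof is therefore a balance inequality. For $u_n\to u$ in $L^1(\Omega)$ with $\liminf_n G_-(u_n)<\infty$, passing to a subsequence that realizes the liminf and for which $|Du_n|(\Omega)\to A$ and $\int_{\partial\Omega}|u_n|\to B$, I must show that the excess boundary mass is dominated by the loss of interior variation, namely $B-\int_{\partial\Omega}|u|\le A-|Du|(\Omega)$; this rearranges exactly into $G_-(u)\le\liminf_n G_-(u_n)$. The exchange rate $1$ between boundary mass and bulk variation is precisely what the sharp trace inequality \eqref{trace} with $c_1=1$ encodes, and this is where the smoothness of $\partial\Omega$ (mean curvature bounded from above) enters. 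Concretely, I would pass to the extensions $\tilde u_n$, extract a weak-$*$ limit measure $\mu=\lim_n|D\tilde u_n|$, and localise near $\partial\Omega$ in tubular coordinates, checking patch by patch that any interior variation of $\tilde u_n$ collapsing onto the boundary controls, at rate $1$, the trace mass it generates, the localized form of the extension identity being the quantitative tool.

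The main obstacle is the endpoint $\beta=-1$, hence $\gamma=-1$. For $|\gamma|<1$ the trace inequality already yields a uniform $BV$ bound, after which the argument runs smoothly through weak-$*$ compactness; but at $\gamma=-1$ the exchange is exactly break-even, the $BV$ bound degenerates, and one cannot merely let $\gamma\downarrow-1$, since an infimum of lower semicontinuous functionals need not be lower semicontinuous. Handling this borderline case by a careful accounting of the limit measure $\mu$ on $\partial\Omega$, rather than by compactness, is the crux, and it is exactly the content of Modica's boundary-contact lower semicontinuity theorem \cite{modica}.
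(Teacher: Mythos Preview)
The paper does not supply its own proof of this proposition; it is quoted as a known result from \cite{modica} (Proposition~1.2 there) and left unproved. So there is no argument in the paper to compare against beyond the citation itself.

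Your decomposition for $\gamma=\min\{\beta,1\}\in[0,1]$, writing $F=(1-\gamma)P_0+\gamma P_1$ with $P_0(u)=|Du|(\Omega)$ and $P_1(u)=|D\tilde u|(\R^N)$, is correct and gives a clean self-contained proof in that range; this is more than the paper offers. For $\gamma\in[-1,0)$ you correctly isolate the difficulty as the lower semicontinuity of $G_-(u)=|Du|(\Omega)-\int_{\partial\Omega}|u|$, give a heuristic about balancing interior-variation loss against boundary-mass gain, and then explicitly defer to Modica's theorem. In that respect your proposal and the paper end up in the same place: the negative-$\beta$ case is Modica's result, invoked rather than reproved.

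One caution on your sketch: you suggest that the ``exchange rate $1$'' balance relies on the sharp trace inequality with $c_1=1$, hence on mean curvature bounded from above. Modica's actual proof works for bounded open sets with Lipschitz boundary and does not go through the trace inequality in this form; it proceeds by localising near the boundary and using a direct comparison with the extension identity on small tubes. So while your mass-exchange intuition is sound, the regularity hypothesis you attach to it is stronger than what is genuinely needed, and the route you outline is not quite Modica's. If you intend to present this as a proof rather than a citation, that discrepancy should be resolved.
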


\section{The first eigenvalue problem with Robin boundary conditions}
\label{s_firsteigen}

In this section we highlight some well known features on the Robin first eigenvalue problem in both cases $p>1$ and $p=1$.

\subsection{The case $p>1$}

Let us consider the functional
\begin{equation}\label{jp}
J_p(u):=  \frac{\displaystyle\int_\Omega |\nabla u|^p + \beta\int_{
		\partial \Omega} |u|^p}{\displaystyle \int_\Omega |u|^p}, \  u\in W^{1,p}(\Omega), u\not\equiv 0,
\end{equation}
and let
\begin{equation}
\label{eigjp}
\lambda(\Omega,p,\beta)=\inf_{\varphi\in W^{1,p}(\Omega), \varphi\not \equiv 0} J_p(\varphi).
\end{equation}
The following classical result concerning problem \eqref{eigjp} holds.
\begin{theorem}
Let $\Omega$ be a bounded Lipschitz open set, and let $p>1$. For any $\beta\in\R$, there exists a minimum of problem \eqref{eigjp}. Moreover, if $\Omega$ is connected, then $\lambda(\Omega,p,\beta)$ is simple, that is admits a unique minimizer, up to a multiplicative constant. In this case, the minimizers are positive or negative in $\Omega$ and they are in $C^{1,\alpha}(\Omega)$. Furthermore, they satisfy the following Robin boundary value problem:
\begin{equation*}
	\begin{cases}
		\displaystyle -\Delta_p \varphi_p = \lambda(\Omega,p,\beta) \left|\varphi_p\right|^{p-2}\varphi_p & \text{ in }\Omega,\\
		\displaystyle \left|\nabla \varphi_{p}\right|^{p-2}\frac{\partial \varphi_p}{\de \nu}  + \beta |\varphi_p|^{p-2}\varphi_p=0 & \text{ on } \partial\Omega.
	\end{cases}
\end{equation*}
\end{theorem}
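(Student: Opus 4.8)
The plan is to handle the five assertions --- existence, simplicity, positivity of the sign, $C^{1,\alpha}$ regularity, and the Euler--Lagrange characterisation --- in turn, using the direct method for existence and leaning on known quasilinear regularity theory for the last points. First I would exploit the positive $p$-homogeneity of both numerator and denominator of $J_p$ to reduce \eqref{eigjp} to minimising $\int_\Omega|\nabla u|^p+\beta\int_{\de\Omega}|u|^p$ over the constraint $\set{u\in W^{1,p}(\Omega):\int_\Omega|u|^p=1}$. When $\beta\ge 0$ the numerator is nonnegative, so any minimising sequence is at once bounded in $W^{1,p}(\Omega)$. The only genuinely delicate case is $\beta<0$, where the boundary term has the wrong sign; here I would invoke the interpolated $L^p$ trace inequality on Lipschitz domains,
\[
\int_{\de\Omega}|u|^p\le \eps\int_\Omega|\nabla u|^p+C_\eps\int_\Omega|u|^p,
\]
to obtain $\int_\Omega|\nabla u|^p+\beta\int_{\de\Omega}|u|^p\ge(1-\eps|\beta|)\int_\Omega|\nabla u|^p-C_\eps|\beta|$; choosing $\eps<1/|\beta|$ shows the infimum is finite and that minimising sequences stay bounded in $W^{1,p}(\Omega)$. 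In either case I would pass to a subsequence converging weakly in $W^{1,p}(\Omega)$, strongly in $L^p(\Omega)$ by Rellich--Kondrachov, and strongly in $L^p(\de\Omega)$ by compactness of the trace operator; the denominator and boundary integral then pass to the limit while the Dirichlet term is weakly lower semicontinuous by convexity, so the weak limit is a minimiser.

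The Euler--Lagrange equation I would read off by differentiating $t\mapsto J_p(\varphi_p+t\phi)$ at $t=0$ for $\phi\in C^\infty(\overline\Omega)$, which gives the weak identity
\[
\int_\Omega|\nabla\varphi_p|^{p-2}\nabla\varphi_p\cdot\nabla\phi+\beta\int_{\de\Omega}|\varphi_p|^{p-2}\varphi_p\,\phi=\lambda(\Omega,p,\beta)\int_\Omega|\varphi_p|^{p-2}\varphi_p\,\phi;
\]
integrating by parts against this identity then separates the interior equation from the Robin condition on $\de\Omega$, once enough regularity is in hand.

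For the sign and for simplicity I would first observe that $|\nabla|u||=|\nabla u|$ almost everywhere, so $|\varphi_p|$ is again a minimiser and we may take $\varphi_p\ge 0$. Rewriting the equation as $-\Delta_p\varphi_p+(-\lambda)\varphi_p^{p-1}=0$ with bounded zeroth-order coefficient, V\'azquez's strong maximum principle forces $\varphi_p>0$ throughout $\Omega$, once a Moser/De Giorgi iteration has placed $\varphi_p\in L^\infty$. The main obstacle is simplicity. Normalising two positive minimisers $u,v$ by $\int_\Omega u^p=\int_\Omega v^p=1$ and setting $w_t=\big((1-t)u^p+tv^p\big)^{1/p}$, the boundary integral $\int_{\de\Omega}w_t^p$ and the denominator $\int_\Omega w_t^p$ are affine in $t$, while the Dirichlet energy satisfies the hidden-convexity inequality $\int_\Omega|\nabla w_t|^p\le(1-t)\int_\Omega|\nabla u|^p+t\int_\Omega|\nabla v|^p$, with equality only when $u$ and $v$ are proportional. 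Consequently $J_p(w_t)\le(1-t)J_p(u)+tJ_p(v)=\lambda(\Omega,p,\beta)$, and the inequality is strict for some $t$ unless $u\propto v$, contradicting minimality; connectedness of $\Omega$ is exactly what upgrades the pointwise proportionality coming from the equality case into a single global constant.

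The $C^{1,\alpha}(\Omega)$ regularity I would not reprove from scratch: since $u\in L^\infty$ makes the right-hand side $\lambda|\varphi_p|^{p-2}\varphi_p$ bounded, the interior $C^{1,\alpha}$ estimates of DiBenedetto and Tolksdorf for quasilinear equations of $p$-Laplacian type apply verbatim. Among all the steps, I expect the simplicity argument --- and in particular making the equality case of the hidden-convexity inequality rigorous and tying it to connectedness --- to be the part requiring the most care.
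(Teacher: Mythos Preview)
Your proposal is correct and, in fact, supplies considerably more detail than the paper does. The paper's own proof consists entirely of pointers to the literature: existence is attributed to a ``well-known argument of Calculus of Variations'' (citing \cite{kp}), and simplicity and regularity are referred to \cite{buda,mr}. Your sketch --- direct method with the interpolated trace inequality to absorb the negative boundary term when $\beta<0$, first variation for the Euler--Lagrange system, $|\varphi_p|$ plus V\'azquez for strict positivity, hidden convexity along the curve $w_t=((1-t)u^p+tv^p)^{1/p}$ for simplicity, and DiBenedetto--Tolksdorf for interior $C^{1,\alpha}$ --- is exactly the content one would find by chasing those references, so the two are aligned in substance; yours simply unpacks what the paper leaves implicit.
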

\begin{proof}
As regards the existence of minimizers, it follows by a well-known argument of Calculus of Variations (see for example \cite{kp}). For the simplicity of the eigenvalue and the regularity of the eigenfunctions we can refer the reader, for example, to \cite{buda,mr}.
\end{proof}

\begin{rem}
	We explicitly observe that the case $\beta=0$ is trivial, because for any $p\ge 1$ $\lambda(\Omega,p,\beta)=0$ and the minimizers are the constant functions.
Moreover, when $\beta\to+\infty$, $\lambda(\Omega,p,\beta)$ tends to the first Dirichlet eigenvalue of the $p-$Laplacian.

\end{rem}

\subsection{The case $p=1$}

Here we are mainly interested into the study of the functional
\begin{equation}\label{j}
	J(u):= \frac{\displaystyle |Du|(\Omega) + \min(\beta,1)\int_{
			\partial \Omega} |u|}{\displaystyle \int_\Omega |u|}, \  u\in BV(\Omega), u\not\equiv 0,
\end{equation}
which in case $\beta\ge 1$ is nothing else than (when $u$ is assumed to be identically zero outside $\Omega$)
$$
J(u)= \frac{\displaystyle \left|Du\right|(\mathbb{R}^N)}{\ds\int_\Omega |u|}, \  u\in BV(\Omega), u \not\equiv 0,
$$
and we deal with the related minimization problem
\begin{equation}\label{minJ}
	\Lambda(\Omega,\beta)=\inf_{\varphi\in BV(\Omega), \varphi\not \equiv 0} J(\varphi).
\end{equation}
\begin{rem}
If $\beta>1$, one could ask what happens when studying the problem
\[
\lambda(\Omega,1,\beta)=\inf_{u\in BV(\Omega), u\not \equiv 0}\frac{\displaystyle |Du|(\Omega) + \beta\int_{
			\partial \Omega} |u|}{\displaystyle \int_\Omega |u|}.
\]
Actually, in this case
\[
\lambda(\Omega,1,\beta)=\Lambda(\Omega,\beta)=h(\Omega),
\] 
where $h(\Omega)$ is the Cheeger constant defined in \eqref{cheeger}. Indeed, it is easy to convince that $\lambda(\Omega,1,\beta)\ge h(\Omega)$. Moreover, if $v$ is a minimum for $h(\Omega)$, Theorem $3.1$ of \cite{ls} assures the existence of a sequence $v_k\in C^\infty_c(\Omega)$ which converges to $v$ in  $L^q(\Omega)$ for any $q\le \frac{N}{N-1}$ and such that $\|\nabla v_k\|_{L^1(\Omega)}$ converges to $|Dv|(\mathbb{R}^N)$ as $k\to \infty$. Hence 
\[
\lambda(\Omega,1,\beta) \le \lim_{k\to +\infty} \frac{\int_{\Omega}|\nabla v_{k}|}{\int_{\Omega}|v_{k}|} = h(\Omega).
\]
\end{rem}
	
 Let us observe that the study of the minimization problem given in \eqref{minJ} can be dealt with by limiting $u$ only to characteristic functions. 

\medskip

Thus, for $E \subseteq \Omega$, evaluating $J$ on the characteristic function of $E$ one has    
\[
J(\chi_E) = \frac{P_\Omega(E) + \min(\beta,1)\mathcal{H}^{N-1}(\partial\Omega \cap\partial E)}{|E|},
\]
and from here on we denote by
\[
R(E,\beta):= J(\chi_E). 
\]
Now we consider the minimization problem given by
\begin{equation}\label{elle}
	\displaystyle \ell(\Omega,\beta):=\inf_{E\subseteq\Omega} R(E,\beta).
\end{equation}	

Depending on $\beta$, the infimum in \eqref{minJ} can be proven to be achieved 
under different conditions on the regularity of the set $\Omega$. 

Therefore, before stating the existence of a minimum for \eqref{minJ}, we need to highlight the regularity of $\partial\Omega$ which is required throughout the paper. 

\begin{rem}[Assumptions on $\Omega$ and on $\beta$]\label{rem_reg}
The boundary of $\Omega$ is required to have Lipschitz regularity when $\beta$ is nonnegative. When $\beta$ is negative, the regularity of $\partial\Omega$ is mainly related to a suitable use of the trace inequality. In particular, through the paper in the case$-1<\beta<0$, we assume $\partial \Omega \in C^1$. For the case $\beta\le-1$, let us observe that the minimum problem for $J$ is not well posed; namely one can show that, for any fixed Lipschitz bounded open set $\Omega$, if $\beta<-1$ then
	\[
	\inf_{u\in BV(\Omega), u\not\equiv 0}J(u)=-\infty.
	\]
	Indeed there exists a sequence $\Omega_\varepsilon\subset\Omega$ such that $P_\Omega(\Omega_\varepsilon) \le P(\Omega) + \varepsilon$ for any $\varepsilon >0$ and such that $|\Omega\setminus \Omega_{\varepsilon}| \to 0$ as $\varepsilon\to 0$ (see for example Theorem $1.1$ of \cite{sch}, as well as the references therein contained).\\  
	Hence one obtains
	\[J(\chi_{\Omega\setminus \Omega_{\varepsilon}}) = \frac{P_\Omega(\Omega_\varepsilon) + \beta P(\Omega)}{|\Omega\setminus \Omega_{\varepsilon}|}\le \frac{(1+\beta)P(\Omega) + \varepsilon}{|\Omega\setminus \Omega_{\varepsilon}|},
	\]
	which goes to $-\infty$ as $\varepsilon \to 0$. 
	
	In the case $\beta = -1$, when $\Omega$ has bounded mean curvature then $\Lambda(\Omega,-1) = \ell(\Omega,-1)$ and they are finite.
	
	We observe that the functional
	\[
 J(\chi_E) = \frac{P_\Omega(E) -\mathcal{H}^{N-1}(\partial\Omega \cap\partial E)}{|E|},
\]
describes the free energy of a liquid which occupies a region $E$ inside a container $\Omega.$ 
If the energy functional is minimized under a volume constraint $|E|= m$, then a minimizer exists (see Theorem 19.5  in  \cite{ma}). Moreover  such a minimizer, denoted by $F$, has constant distributional mean curvature in $\Omega$ and by the Young's law it satisfies  (see Theorems 19.7 and 19.8 in \cite {ma})
$< \nu_F, \nu_{\Omega}> =-1$ on $ \partial(\Omega \cap \partial F)$. 
The above considerations ensure that if one considers a rounded square in the plane the minimum in \eqref{elle} is not achieved.
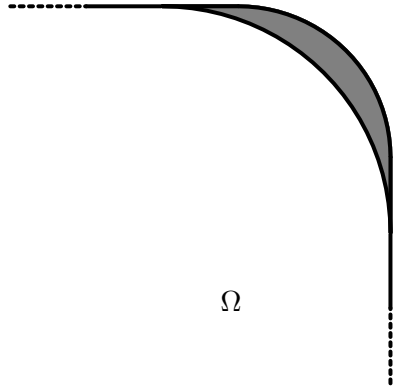
\begin{figure}[h]
\begin{center}
\begin{tikzpicture}[line cap=round,line join=round,>=triangle 45,x=1cm,y=1cm,scale=1]
\draw [line width=1.5pt] (4,0)-- (4,2);
\draw [line width=1.5pt] (2,4)-- (0,4);
\draw [shift={(2,2)},line width=1.5pt]  plot[domain=0:1.5707963267948966,variable=\t]({1*2*cos(\t r)+0*2*sin(\t r)},{0*2*cos(\t r)+1*2*sin(\t r)});
\draw [shift={(1,1)},line width=1.5pt]  plot[domain=0:1.5707963267948966,variable=\t]({1*3*cos(\t r)+0*3*sin(\t r)},{0*3*cos(\t r)+1*3*sin(\t r)});
\draw [shift={(2,2)},line width=1.5pt,color=gray,fill=gray,fill opacity=1]  (0,0) --  plot[domain=0:1.5707963267948966,variable=\t]({1*2*cos(\t r)+0*2*sin(\t r)},{0*2*cos(\t r)+1*2*sin(\t r)}) -- cycle ;
\draw [shift={(2,2)},line width=1.5pt,color=black]  plot[domain=0:1.5707963267948966,variable=\t]({1*2*cos(\t r)+0*2*sin(\t r)},{0*2*cos(\t r)+1*2*sin(\t r)});
\fill[line width=1.5pt,color=gray,fill=gray,fill opacity=1] (2,2) -- (4,2) -- (4,1) -- (1,1) -- (1,4) -- (2,4) -- cycle;
\draw [shift={(1,1)},line width=1pt,color=white,fill=white,fill opacity=1]  (0,0) --  plot[domain=0:1.5707963267948966,variable=\t]({1*3*cos(\t r)+0*3*sin(\t r)},{0*3*cos(\t r)+1*3*sin(\t r)}) -- cycle;
\draw [shift={(1,1)},line width=1.5pt,color=black,fill opacity=1]    plot[domain=0:1.5707963267948966,variable=\t]({1*3*cos(\t r)+0*3*sin(\t r)},{0*3*cos(\t r)+1*3*sin(\t r)});
\draw [line width=1.5pt] (4,2)-- (4,1);
\draw [dotted,line width=1.5pt] (4,-1)-- (4,0);
\draw [line width=1.5pt] (1,4)-- (2,4);
\draw [dotted,line width=1.5pt] (-1,4)-- (0,4);
\draw (1.9017399301080857,.0964282073499274) node {$\Omega$};
\end{tikzpicture}
\end{center}
\caption{When $\beta=-1$ and $\Omega$ is a rounded square of the plane, the sequence of minimizers of $\Lambda(\Omega,-1)$ is given by the sets bounded by a circle of radius $R$ contained in $\Omega$ and tangent to the square, and one of the rounded corners (the grey domain is one of such sets). When the radius of this circle approaches the radius of the circle in the corner, $J$ approaches to $\lambda$.}
\end{figure}

 \end{rem}

\newpage
We are in position to state the existence result for \eqref{minJ}:
\begin{theorem}\label{teo_ex_p=1}
	Let $\beta> -1$. Then there exists a minimum to problem \eqref{minJ}. In particular it holds
	\begin{equation}
		\label{char0}
		\Lambda(\Omega,\beta)=\ell(\Omega,\beta).
	\end{equation}
	Moreover, if $v\in BV(\Omega)$ is a minimizer of \eqref{minJ}, then
	\begin{equation}
		\label{char3}
		\Lambda(\Omega,\beta)=R(\{v>t\},\beta)
	\end{equation}
	for some $t\in\R$.
\end{theorem}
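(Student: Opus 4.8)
The plan is to prove the three assertions in the order $\eqref{char0}$, then existence, then the level-set characterization $\eqref{char3}$, using the coarea formula as the central tool. The key observation is that the functional $J$ is \emph{one-homogeneous} and that both its numerator and denominator decompose additively over the level sets of $u$ via coarea. I would begin by establishing the inequality $\Lambda(\Omega,\beta)\le\ell(\Omega,\beta)$, which is immediate since characteristic functions of measurable $E\subseteq\Omega$ (with finite perimeter) form a subclass of the admissible competitors in $\eqref{minJ}$, so the infimum over the larger class $BV(\Omega)$ can only be smaller.

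The heart of the matter is the reverse inequality $\ell(\Omega,\beta)\le\Lambda(\Omega,\beta)$, which I would obtain by a standard layer-cake argument. Fix any $u\in BV(\Omega)$ with $u\not\equiv0$; by replacing $u$ with $|u|$ (which does not increase $J$, since $|D|u||(\Omega)\le|Du|(\Omega)$ and the boundary and denominator terms are unchanged) I may assume $u\ge0$. Writing $u=\int_0^\infty\chi_{\{u>t\}}\,dt$, the coarea formula gives $|Du|(\Omega)=\int_0^\infty P_\Omega(\{u>t\})\,dt$, while the boundary integral and the denominator satisfy $\int_{\partial\Omega}|u|=\int_0^\infty\mathcal H^{N-1}(\partial\Omega\cap\partial^*\{u>t\})\,dt$ (via the coarea formula applied to the trace) and $\int_\Omega|u|=\int_0^\infty|\{u>t\}|\,dt$ by Cavalieri. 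Therefore the numerator of $J(u)$ equals $\int_0^\infty\big(P_\Omega(\{u>t\})+\min(\beta,1)\mathcal H^{N-1}(\partial\Omega\cap\partial^*\{u>t\})\big)\,dt\ge\ell(\Omega,\beta)\int_0^\infty|\{u>t\}|\,dt$, where I used $R(\{u>t\},\beta)\ge\ell(\Omega,\beta)$ for a.e.\ $t$. This yields $J(u)\ge\ell(\Omega,\beta)$, and taking the infimum over $u$ gives $\Lambda(\Omega,\beta)\ge\ell(\Omega,\beta)$, completing $\eqref{char0}$.

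For existence I would argue by the direct method on the geometric problem $\eqref{elle}$. Take a minimizing sequence of sets $E_k\subseteq\Omega$ for $R(\cdot,\beta)$; after normalizing we may assume $|E_k|$ is bounded below away from zero (otherwise rescale, using one-homogeneity) and the perimeters are bounded, so by $BV$-compactness $\chi_{E_k}\to\chi_E$ in $L^1$ for some set $E$ of finite perimeter. The lower semicontinuity of the numerator is exactly Proposition \ref{modicone}, while the denominator passes to the limit by $L^1$-convergence, so $E$ is a minimizer and $R(E,\beta)=\ell(\Omega,\beta)=\Lambda(\Omega,\beta)$. The delicate point here, and the step I expect to be the \textbf{main obstacle}, is the handling of the boundary trace term when $-1<\beta<0$: the coefficient $\min(\beta,1)=\beta$ is negative, so lower semicontinuity is not automatic and relies on the $C^1$-regularity of $\partial\Omega$ together with the trace inequality $\eqref{trace}$ with $c_1=1$, which is precisely what Proposition \ref{modicone} encodes for $\beta\ge-1$; one must also verify that the minimizing sequence does not lose mass so that $|E|>0$, and here the condition $\beta>-1$ is essential (as Remark \ref{rem_reg} shows the infimum is $-\infty$ for $\beta<-1$).

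Finally, for $\eqref{char3}$, given any minimizer $v\in BV(\Omega)$ of $\eqref{minJ}$, I reuse the coarea decomposition. Since $J(v)=\Lambda(\Omega,\beta)=\ell(\Omega,\beta)$, the inequality $\int_0^\infty R(\{v>t\},\beta)|\{v>t\}|\,dt\ge\ell(\Omega,\beta)\int_0^\infty|\{v>t\}|\,dt$ must in fact be an equality, which forces $R(\{v>t\},\beta)=\ell(\Omega,\beta)=\Lambda(\Omega,\beta)$ for a.e.\ $t$ in the set where $|\{v>t\}|>0$. Picking any such $t$ produces a superlevel set realizing the infimum, giving $\eqref{char3}$. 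This last step is essentially a measure-theoretic ``equality in the averaging inequality'' argument and should be routine once the coarea identities from the first part are in hand.
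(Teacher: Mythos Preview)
Your coarea argument for \eqref{char0} and the equality-in-the-average deduction of \eqref{char3} are essentially the same as the paper's; the only cosmetic difference is that you first replace $u$ by $|u|$ and integrate over $[0,\infty)$, whereas the paper keeps $v$ signed and integrates over $\R$. Either way works, and your observation that the inequality $J(u)\ge\ell(\Omega,\beta)$ holds for \emph{every} $u\in BV(\Omega)$ (not just for a minimizer) is a clean way to decouple \eqref{char0} from existence.

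Where your route departs from the paper is the existence step, and there you have a genuine gap. You propose to run the direct method on the geometric problem \eqref{elle} and say ``after normalizing we may assume $|E_k|$ is bounded below away from zero (otherwise rescale, using one-homogeneity).'' But one-homogeneity of $J$ means $J(\lambda u)=J(u)$ for scalars $\lambda$; for a characteristic function $\chi_{E}$ the rescaled competitor $\lambda\chi_{E}$ is no longer a characteristic function, and dilating the set $E$ itself need not keep it inside $\Omega$. So there is no ``normalization'' available at the set level, and nothing in your argument prevents $|E_k|\to 0$ with the limit set empty. This is precisely the place where $\beta>-1$ must be used quantitatively. The paper sidesteps the issue by applying the direct method to \emph{functions} in $BV(\Omega)$ with the normalization $\|u_n\|_{L^1(\Omega)}=1$ (which \emph{is} the correct use of one-homogeneity), and then showing $u_n$ is bounded in $BV$: trivially if $\beta\ge 0$, and via the trace inequality \eqref{trace} with $c_1=1+\varepsilon$ if $-1<\beta<0$, so that $(1+\beta+\beta\varepsilon)\,|Du_n|(\Omega)$ is controlled. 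Lower semicontinuity (Proposition~\ref{modicone}) then finishes.

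If you want to salvage the set-based approach, you must supply an honest replacement for the rescaling step: for instance, combine the trace inequality \eqref{trace} with the relative isoperimetric inequality $P_\Omega(E)\ge c\,|E|^{(N-1)/N}$ (for $|E|\le |\Omega|/2$) to obtain $R(E,\beta)\ge c(1+\beta(1+\varepsilon))|E|^{-1/N}+\beta c_2$, which blows up as $|E|\to 0$ and hence forces $\inf_k |E_k|>0$ along any minimizing sequence. With that fix your existence proof goes through; without it, the argument as written is incomplete.
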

\begin{proof}
 Let $u_n\in BV(\Omega)$ be a minimizing sequence to \eqref{minJ}, with $\|u_n\|_{L^1(\Omega)}=1$.
 
 Let $\beta>0$. In this case $u_n$ is bounded in $BV(\Omega)$, then $u_n$ converges (up to a subsequence) weak$^*$ in $BV$ and strongly  in $L^1(\Omega)$ to $u\in BV(\Omega)$ . Since $u_n$ and $u$ are assumed to be identically zero outside $\Omega$, 
in the case $\beta\ge1$,  $J(u_n)=|D u_n|(\R^N)$ and the lower semicontinuity of the total variation gives
\[
J(u) \le \liminf_n J(u_n)
\]
hence $u$ is a minimum for the functional $J$.

When  $0<\beta<1$, then given $\delta>0$ and $\Omega_\delta=\{x\in\Omega\colon d(x,\de\Omega)>\delta\}$, where $d(\cdot,\de\Omega)$ is the distance function to the boundary of $\Omega$ we have
\[
J(u_n) \ge  |D u_n|(\Omega_\delta) +\beta\left[|D u_n| (\Omega\setminus \Omega_\delta)+\int_{\de\Omega}|u_n|\right]
\]
and by lower semicontinuity
\[
\liminf_{n} J(u_n) \ge |D u|(\Omega_\delta)+\beta |Du|(\R^N\setminus\Omega_\delta).
\]
Hence, being $u\in BV(\Omega)$ for $\delta\to0$ it holds
\[
\liminf_{n} J(u_n) \ge J(u)
\]
and $u$ is a minimum for $J$.

Now let us suppose $-1<\beta<0$. 
We have $J(u_n)\le C$; then using \eqref{trace} with $c_1=1+\eps$, for $\eps$ sufficiently small we have
\[
J(u_n) \ge (1+\beta+\beta\eps)  |D u_n|(\Omega) + c_2\beta  \ge c_2 \beta;
\]
moreover using again \eqref{trace} we get
\[
 |D u_n|(\Omega) \le C-\beta (1+\eps)  |D u_n|(\Omega) -\beta c_2,
\]
and for $\eps$ small this implies that $u_n$ is bounded in $BV(\Omega)$.

Finally, the functional $J$ is lower semicontinuous by Proposition \ref{modicone}. This allows to conclude that $u$ is a minimum of $J$.

Now we show \eqref{char0}. Obviously, we have 
	\[
	\Lambda(\Omega,\beta) \le \ell(\Omega,\beta).
	\]
	In order to show the reverse inequality, let $v\in BV(\Omega)$ be a minimizer of \eqref{minJ}. Then by the coarea formula 
	\[
	 |Dv| (\Omega)= \int_{-\infty}^{+\infty} P_\Omega(\{v>t\})\ dt,
	\]
 one obtains
	\begin{equation} \label{char1}
		\begin{aligned}
			\Lambda(\Omega,\beta)=J(v) &=   \frac{ \displaystyle \int_{-\infty}^{+\infty} P_\Omega(\{v>t\})\ dt + \min(\beta,1)\int_{-\infty}^{+\infty} \mathcal{H}^{n-1}(\{v>t\}\cap \partial \Omega)\ dt}{\displaystyle \int_{-\infty}^{+\infty}|\{v>t\}|\ dt}
			\\
			& = \frac{\displaystyle \int_{-\infty}^{+\infty} R(\{v>t\},\beta)|\{v>t\}|dt}{\displaystyle \int_{-\infty}^{+\infty}|\{v>t\}|\ dt} \ge \inf_{E\subseteq \Omega} R(E,\beta) = \ell(\Omega,\beta), 
		\end{aligned}
	\end{equation} 
	which shows that 
	\begin{equation}
		\label{char2}
		\Lambda(\Omega,\beta) = \ell(\Omega,\beta).
	\end{equation}
	In particular, combining \eqref{char1} and \eqref{char2} we have that
	\[
	\int_{-\infty}^{+\infty}\Big\{R(\{v>t\},\beta)-\ell(\Omega,\beta)\Big\}|\{v>t\}|\,dt=0,
	\]
	and the integrand is nonnegative by definition of $\ell$. Hence, being $v\not\equiv0$, \eqref{char3} holds.
\end{proof}

\section{$\Gamma$-convergence}
\label{s_Gamma}

In this section we are concerned with the convergence and the relevant insight that one gains by taking $p \to 1^+$  in the functional $J_{p}$ defined in \eqref{jp}. Once that $J_p$ is extended to $\infty$ for functions belonging to $BV(\Omega)\setminus W^{1,p}(\Omega)$, we will show that the $\Gamma$-limit as $p\to 1^+$ for $J_p$ is the functional $J$.

\medskip

For the sake of completeness, we firstly precise the notion of $\Gamma$-convergence in our setting. 
\begin{definiz}\label{defconv}
The functional $J_p$ $\Gamma$-converges to $J$ as $p\to 1^+$ in the weak$^*$ topology of $BV(\Omega)$ if, for any $u\in BV(\Omega)$, it holds
\begin{itemize}  	
	\item[i)] \textit{liminf inequality}:  for any sequence $u_p \in BV(\Omega)$ which converges to $u$ weak$^*$ in $BV(\Omega)$ as $p\to 1^+$, then
  	\begin{equation}\label{liminf}
  		\liminf_{p\to 1^+} J_p(u_p) \ge J(u);
  	\end{equation}
  	\item[ii)] \textit{limsup inequality}: there exists a sequence $u_p \in W^{1,p}(\Omega)$ which converges to $u$  weak$^*$ in $BV(\Omega)$ as $p\to 1^+$ such that
  	\begin{equation}\label{limsup}
  		\limsup_{p\to 1^+} J_p(u_p) \le J(u).
  	\end{equation}
  \end{itemize}
\end{definiz}

\begin{theorem}\label{gammaconvteo}
Let $\beta > -1$ then $J_p$ $\Gamma$-converges to the functional $J$ in the sense of Definition \ref{defconv}.  
\end{theorem}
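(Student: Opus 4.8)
The plan is to verify the two inequalities of Definition \ref{defconv} separately, after two reductions common to both. Since $J_p$ and $J$ are $0$-homogeneous, given a sequence $u_p\to u$ weak$^*$ with $u\not\equiv0$ I may rescale and assume $\int_\Omega|u_p|=\int_\Omega|u|=1$ (the case $u\equiv0$ is excluded from the domain of $J$). Moreover, weak$^*$ convergence forces $u_p$ to be bounded in $BV(\Omega)$, hence in $L^{N/(N-1)}(\Omega)$ by the Sobolev embedding, so that $\{|u_p|^p\}_p$ is uniformly integrable; together with $u_p\to u$ in $L^1(\Omega)$ and $p\to1^+$, Vitali's theorem gives $\int_\Omega|u_p|^p\to\int_\Omega|u|$. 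As the denominators converge to the positive number $\int_\Omega|u|$, in both inequalities it suffices to control the numerators $N_p(u_p):=\int_\Omega|\nabla u_p|^p+\beta\int_{\partial\Omega}|u_p|^p$ and to compare their limit with $F(u)=|Du|(\Omega)+\min(\beta,1)\int_{\partial\Omega}|u|$.

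For the liminf inequality \eqref{liminf} (assuming $\liminf_p J_p(u_p)<\infty$, otherwise it is trivial) the basic tool is Young's inequality $t^p\ge pt-(p-1)$, which passes from $L^p$ to $L^1$ quantities. When $\beta\ge0$ the boundary term is nonnegative and, using $\beta\ge\min(\beta,1)$, one gets $N_p(u_p)\ge p\,F(u_p)-o(1)$ with $F(u_p)=|Du_p|(\Omega)+\min(\beta,1)\int_{\partial\Omega}|u_p|$; since $F\ge0$ here and $F$ is lower semicontinuous on $BV(\Omega)$ for the $L^1$ topology (Proposition \ref{modicone}), applying it to $u_p\to u$ in $L^1$ yields $\liminf_p N_p(u_p)\ge F(u)$.

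The delicate case, and the main obstacle, is $-1<\beta<0$: the boundary term carries a negative sign, the trace is not continuous along weak$^*$ convergence, and the $p$-th power amplifies possible concentrations of the traces, so Young's inequality cannot be used in the favourable direction. To handle it I would truncate: with $u_p^M=T_M(u_p)$ one has $N_p(u_p)-N_p(u_p^M)=\int_{\{|u_p|>M\}}|\nabla u_p|^p-|\beta|\int_{\{|u_p|>M\}\cap\partial\Omega}(|u_p|^p-M^p)$. The boundary contribution I estimate by applying the trace inequality \eqref{trace} (with $c_1=1+\eps$, available since $\partial\Omega\in C^1$) to the $BV$ function $(|u_p|^p-M^p)^+$, followed by Young's inequality; this bounds it by $(1+\eps)\int_{\{|u_p|>M\}}|\nabla u_p|^p$ plus terms controlled by $\int_{\{|u_p|>M\}}|u_p|^p$. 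Because $|\beta|<1$, choosing $\eps$ small makes the coefficient $1-|\beta|(1+\eps)$ nonnegative, whence $N_p(u_p)\ge N_p(u_p^M)-|\beta|c_2\sup_p\int_{\{|u_p|>M\}}|u_p|^p-o(1)$. For the bounded sequences $u_p^M$ the $L^p$ and $L^1$ boundary integrals differ by at most $\mathcal{H}^{N-1}(\partial\Omega)\,M(M^{p-1}-1)\to0$, so the argument of the case $\beta\ge0$ applies and gives $\liminf_p N_p(u_p^M)\ge F(T_Mu)$. Letting $M\to\infty$, the uniform integrability kills the error term while $F(T_Mu)\to F(u)$ by monotone convergence, yielding $\liminf_p N_p(u_p)\ge F(u)$.

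For the limsup inequality \eqref{limsup} I build the recovery sequence by density and a standard diagonal argument. If $-1<\beta\le1$, so that $\min(\beta,1)=\beta$, I approximate $u$ by $w_k\in C^\infty(\overline\Omega)$ converging to $u$ strictly in $BV(\Omega)$; strict convergence is continuous for traces, so $|Dw_k|(\Omega)\to|Du|(\Omega)$, $\int_{\partial\Omega}|w_k|\to\int_{\partial\Omega}|u|$ and $\int_\Omega|w_k|\to\int_\Omega|u|$. For each fixed smooth $w_k$, dominated convergence gives $\lim_{p\to1^+}J_p(w_k)=(|Dw_k|(\Omega)+\beta\int_{\partial\Omega}|w_k|)/\int_\Omega|w_k|$, which tends to $J(u)$ as $k\to\infty$; diagonalising in $k=k(p)$ produces the required sequence. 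If $\beta>1$, so that $\min(\beta,1)=1$ and $J(u)$ equals the total variation in $\R^N$ of the extension of $u$ by zero, namely $|Du|(\Omega)+\int_{\partial\Omega}|u|$, divided by $\int_\Omega|u|$, I instead approximate by $v_k\in C^\infty_c(\Omega)$ with $\int_\Omega|\nabla v_k|\to|Du|(\Omega)+\int_{\partial\Omega}|u|$, exactly as in Theorem $3.1$ of \cite{ls} invoked above; these have vanishing trace, so the boundary term disappears and $\lim_{p\to1^+}J_p(v_k)=\int_\Omega|\nabla v_k|/\int_\Omega|v_k|\to J(u)$, and I diagonalise again. In all cases weak$^*$ convergence of the recovery sequence to $u$ follows from the $L^1$ convergence together with the boundedness of the total variations.
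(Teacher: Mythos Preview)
Your argument is correct and shares the paper's overall scheme: the same limsup construction (strict $BV$ approximation with trace convergence for $\beta\le1$, compactly supported smooth approximation from \cite{ls} for $\beta>1$, followed by diagonalisation) and the same liminf mechanism for $\beta\ge0$ (Young's inequality $t^p\ge pt-(p-1)$ to pass from $L^p$ to $L^1$ quantities, then Proposition~\ref{modicone}). Where you genuinely depart from the paper is the liminf for $-1<\beta<0$. The paper localises in space: it introduces a boundary strip $\Omega'_\delta$ and a cutoff $\psi$, applies the trace inequality \eqref{trace} with $c_1=|\beta|^{-1}$ to $(u-|u_p|^{p-1}u_p)\psi$, manipulates through the auxiliary function $|u_p|^{p-1}u_p$, and finally lets $\delta\to0$. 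You instead truncate in the range: you isolate the contribution of $\{|u_p|>M\}$, dominate its negative boundary part via \eqref{trace} applied to $(|u_p|^p-M^p)^+$ with $c_1=1+\eps$ chosen so that $|\beta|c_1<1$, observe that on the bounded remainder $T_M u_p$ the map $t\mapsto t^p$ converges uniformly to $t$ on $[0,M]$ so that Modica's lower semicontinuity applies directly, and send $M\to\infty$ using the uniform integrability of $|u_p|^p$. Both approaches exploit $|\beta|<1$ in the same way; yours avoids the auxiliary $|u_p|^{p-1}u_p$ and the strip geometry, at the price of the extra limit in $M$. One technical caveat: strict approximation (\cite{afp}, Thm.~3.9) produces $w_k\in C^\infty(\Omega)$, not $C^\infty(\overline\Omega)$, so your dominated-convergence step $\int_\Omega|\nabla w_k|^p\to\int_\Omega|\nabla w_k|$ needs a further word---e.g.\ extend each $w_k\in W^{1,1}(\Omega)$ to $W^{1,1}(\R^N)$, mollify to get globally smooth functions close in $W^{1,1}$ and in trace, and diagonalise once more.
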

\begin{proof}
For any $u\in BV(\Omega)$ we have to show inequalities \eqref{liminf} and \eqref{limsup} where, without loss of generality, one can assume that $||u_p||_{L^p(\Omega)} = 1$. 
 It is clear that in order to show \eqref{liminf} we can assume that 
the $\liminf$ is finite (otherwise the inequality is trivially satisfied). Hence we can suppose that $u_p$ is in $W^{1,p}(\Omega)$ for any $p>1$. 
\smallskip
\noindent 
We split  the proof in several cases depending   to the value of $\beta$.

\medskip
\textbf{Case $\beta \ge 1$.} We first show \eqref{limsup}. Let us note that Theorem $3.1$ of \cite{ls} assures the existence of a sequence $u_k\in C^\infty_c(\Omega)$ which converges to $u$ in  $L^q(\Omega)$ for any $q\le \frac{N}{N-1}$ and such that $\|\nabla u_k\|_{L^1(\Omega)}$ converges to $|Du|(\mathbb{R}^N)$ as $k\to \infty$. Hence, for any $k>0$, one clearly has that $\|\nabla u_k\|_{L^p(\Omega)}$ converges to $\|Du_k\|_{L^1(\Omega)}$. This also implies the existence of a subsequence $p_k \to 1^+$ as $k\to\infty$ such that 
$\|\nabla u_k\|^{p_k}_{L^{p_k}(\Omega)}$ converges to $|Du|(\mathbb{R}^N)$ as $k\to \infty$. This is sufficient to take $k\to \infty$ in $J_{p_k}(u_k)$ deducing \eqref{limsup}.

\smallskip

Now let us focus on \eqref{liminf}; then $u_p \in W^{1,p}(\Omega)$ is any given sequence which converges to $u$ weak$^*$ in $BV(\Omega)$ as $p\to 1^+$. It follows from the H\"older inequality, a convexity argument and from the fact that $\beta \ge 1$, that one yields to
\[
\begin{aligned}
	\left(\int_{\Omega} |\nabla u_p| + \int_{\partial \Omega} |u_p|\right)^p &\le  \left(\left(\int_{\Omega} |\nabla u_p|^p\right)^{\frac{1}{p}}|\Omega|^{1-\frac{1}{p}} + \left(\int_{\partial \Omega} |u_p|^p\right)^{\frac{1}{p}}|\partial \Omega|^{1-\frac{1}{p}}\right)^p
	\\
	&\le 2^{p-1}\left(\int_{\Omega} |\nabla u_p|^p|\Omega|^{p-1} + \beta\int_{\partial \Omega} |u_p|^p|\partial \Omega|^{p-1}\right)
	\\
	&
	\le 2^{p-1}\max{(|\Omega|^{p-1},|\partial\Omega|^{p-1})} J_p(u_p). 
\end{aligned}
\]
Now since  $2^{p-1}\max{(|\Omega|^{p-1},|\partial\Omega|^{p-1})}$ tends to one as $p\to 1^+$, one deduces from lower semicontinuity that
\[
\liminf_{p\to 1^+} J_p(u_p) \ge |D u|(\Omega) + \int_{\partial \Omega} |u| = |D u|(\mathbb{R}^N) = J(u),
\]
which is \eqref{liminf}.

\medskip

\textbf{Case $0\le\beta<1$.} In order to show \eqref{limsup} we observe that it follows from Theorem $3.9$ of \cite{afp} that there exists a sequence $u_k\in C^\infty(\Omega)$ strictly converging to $u$ in the $BV$-norm, namely $u_k$ converges strongly to $u$ in $L^1(\Omega)$ as $k\to \infty$ and $\|\nabla u_k\|_{L^1(\Omega)}$ converges to $|D u|(\Omega)$ as $k\to\infty$. 
Moreover it follows from Theorem $3.88$ of \cite{afp} that $u_k$ converges to $u$ in $L^1(\partial\Omega, \mathcal{H}^{N-1})$.
Reasoning as for the case $\beta \ge 1$ one deduces that, as $p_k \to 1^+$, $\|\nabla u_k\|^{p_k}_{L^{p_k}(\Omega)}$ converges to $|Du|(\Omega)$ as $k\to \infty$ and  $\|u_k\|^{p_k}_{L^{{p_k}}(\partial\Omega,\mathcal{H}^{N-1})}$ converges to $\|u\|_{L^{1}(\partial\Omega,\mathcal{H}^{N-1})}$ as $k\to \infty$. This means that $u_k$ is the sequence which allows to deduce \eqref{limsup}.

\smallskip

To obtain \eqref{liminf}, let us observe that it follows from the Young inequality that
\begin{equation*}
	\begin{aligned}
	J_p(u_p) &= \int_\Omega |\nabla u_p|^p + \beta\int_{\partial \Omega} |u_p|^p 
	\\
	 &\ge \int_{\Omega} |\nabla u_p| + \beta\int_{\partial \Omega} |u_p| -\frac{p-1}{p} \left(|\Omega| + |\partial\Omega|\right),
	\end{aligned}
\end{equation*} 
where $u_p$ is any sequence converging weak$^*$ to $u$ in $BV(\Omega)$ as $p\to 1^+$.
Then inequality \eqref{liminf} is a direct application of Proposition $1.2$ of \cite{modica}.

\medskip

\textbf{Case $-1< \beta<0$.}  
In this case the inequality \eqref{limsup} can be deduced as for the case $0\le\beta<1$. To show \eqref{liminf} let $u_p \in W^{1,p}(\Omega)$ be a sequence converging to $u$ weak$^*$ in $BV(\Omega)$ as $p\to1^+$ and let us denote by
$$\Omega_\delta:= \{x\in\Omega: d(x,\partial\Omega)>\delta\}, \ \ \Omega'_\delta := \Omega \setminus \Omega_\delta.$$
Let $\psi$ be a smooth function which is zero on $\Omega_\delta$, one on $\partial\Omega$ and such that $|\nabla \psi|\le c\delta^{-1}$ for some positive constant $c$. 
An application of \eqref{trace} to $v=(u-|u_p|^{p-1}u_p)\psi$ gives that 
\begin{equation}\label{trace2}
	\begin{aligned}
	\int_{\partial\Omega} |u-|u_p|^{p-1}u_p| &\le c_1 |D(u-|u_p|^{p-1}u_p)| (\Omega'_\delta)+ (c_2+ c_1c\delta^{-1})\int_{\Omega'_\delta}|u-|u_p|^{p-1}u_p|
	\\
	&\le c_1 |Du|(\Omega'_\delta) + c_1\int_{\Omega'_\delta} |\nabla |u_p|^{p-1}u_p| 
	\\
	&+ (c_2 + c_1c\delta^{-1})\int_{\Omega'_\delta}|u-|u_p|^{p-1}u_p|
	,
	\end{aligned}
\end{equation}
where $\delta$ has been chosen such that $|D(u-|u_p|^{p-1}u_p)|  (\partial\Omega_\delta)= 0$ for any $p>1$ which is admissible since $u-|u_p|^{p-1}u_p \in BV(\Omega)$.\\
Now, using that $|a-b|\ge ||a|-|b||$, it follows from \eqref{trace2} that  
\begin{equation}\label{betaneg1}
	\begin{aligned}
	J(u)-J_p(u_p) &=  |Du| (\Omega) - \int_\Omega |\nabla u_p|^p + \beta\int_{\partial \Omega} \left(|u| - |u_p|^p\right) 
	\\
	&\le  |Du| (\Omega)- \int_\Omega |\nabla u_p|^p + |\beta|c_1 |Du|(\Omega'_\delta)
	\\
	& + |\beta|c_1\int_{\Omega'_\delta} |\nabla |u_p|^{p-1}u_p| 
	\\
	&+ |\beta|(c_2+ c_1c\delta^{-1})\int_{\Omega'_\delta}|u-|u_p|^{p-1}u_p|=:A.
	\end{aligned}
\end{equation}
It follows from Remark \ref{rem_reg} and from the subsequent  discussion  the trace inequality \eqref{trace} on the value of the constants $c_1$ and $c_2$, one can fix $c_1= |\beta|^{-1}$. Hence a simple calculation takes to 
\begin{equation}\label{betaneg2}
\begin{aligned}
A &= A -  |Du|(\Omega_\delta)+|Du|(\Omega_\delta) - \int_{\Omega_\delta} |\nabla |u_p|^{p-1}u_p| +\int_{\Omega_\delta} |\nabla |u_p|^{p-1}u_p|
\\
&\le 2 |Du| (\Omega'_\delta) + \left(|Du|(\Omega_\delta) - \int_{\Omega_\delta} |\nabla |u_p|^{p-1}u_p|\right) + \int_{\Omega} |\nabla |u_p|^{p-1}u_p| 
\\
&-\int_\Omega |\nabla u_p|^p + |\beta|(c_2+ c_1c\delta^{-1})\int_{\Omega'_\delta}|u-|u_p|^{p-1}u_p |.
\end{aligned}
\end{equation}
Let us observe that from the Young inequality one has that 
\begin{equation}\label{betaneg3}
	\int_{\Omega} |\nabla |u_p|^{p-1}u_p| = \int_{\Omega} p|u_p|^{p-1}|\nabla u_p| \le \int_\Omega |\nabla u_p|^p + (p-1)\int_\Omega |u_p|^p . 
\end{equation}
Therefore, gathering \eqref{betaneg3} and \eqref{betaneg2} in \eqref{betaneg1} one yields to 
\begin{equation}\label{betaneg4}
	\begin{aligned}
		J(u)-J_p(u_p) &\le 2 |Du| (\Omega'_\delta) + |Du|(\Omega_\delta) - \int_{\Omega_\delta} |\nabla |u_p|^{p-1}u_p|
		\\
		&+ (p-1)\int_\Omega |u_p|^p
		+|\beta|(c_2 + c_1c\delta^{-1})\int_{\Omega'_\delta}|u-|u_p|^{p-1}u_p|.
	\end{aligned}
\end{equation}
Now observe that, since from the compact embedding one has that $u_p$ converges to $u$ in $L^{q}(\Omega)$ for any $q<\frac{N}{N-1}$, it also holds that $|u_p|^{p-1}u_p$ converges to $u$ in $L^1(\Omega)$ as $p\to 1^+$. Then, taking $p\to 1^+$ in \eqref{betaneg4} (recall that the second term on the right hand side of \eqref{betaneg4} is lower semicontinuous with respect to the $L^1$-convergence), one gains 
\begin{equation*}\label{betaneg5}
	\begin{aligned}
		\limsup_{p\to 1^+} \left(J(u)-J_p(u_p)\right) \le 2|Du| (\Omega'_\delta).
	\end{aligned}
\end{equation*}
Finally, taking $\delta\to0^+$, one obtains \eqref{liminf}. 
\end{proof}

\begin{cor}
\label{gammaconvcor}
Let $\beta > -1$. Then
\begin{equation}
\label{lim1}
\lim_{p\to 1^{+}} \lambda(\Omega,p,\beta)=\Lambda(\Omega,\beta).
\end{equation}
Moreover,  if $u_{p}\in W^{1,p}(\Omega)$ are minimizers of \eqref{eigjp}, with  $\|u_{p}\|_{L^{p}(\Omega)}=1$, then
\[
u_{p}\to u\;\text{weak}^{*}\text{ in }BV(\Omega)
\]
where $u$ is a minimizer of \eqref{minJ}.
\end{cor}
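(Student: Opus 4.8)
The plan is to deduce the statement from Theorem \ref{gammaconvteo} via the fundamental theorem of $\Gamma$-convergence, which, besides the $\Gamma$-convergence already established, requires an equicoercivity property for the normalized minimizers. I would organize the argument into an upper bound for the infima, an equicoercivity step, and a lower bound that simultaneously identifies the limit.

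First I would prove $\limsup_{p\to1^+}\lambda(\Omega,p,\beta)\le\Lambda(\Omega,\beta)$. By Theorem \ref{teo_ex_p=1} there is a minimizer $u$ of \eqref{minJ}; feeding $u$ into the limsup inequality \eqref{limsup} produces a recovery sequence $w_p\in W^{1,p}(\Omega)$ with $\limsup_p J_p(w_p)\le J(u)=\Lambda(\Omega,\beta)$. Since $\lambda(\Omega,p,\beta)\le J_p(w_p)$ by \eqref{eigjp}, the upper bound follows, and in particular $\lambda(\Omega,p,\beta)$ is bounded above by a constant $C$ for $p$ close to $1$.

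The core step is equicoercivity: I claim the normalized minimizers $u_p$ (with $\|u_p\|_{L^p(\Omega)}=1$) are bounded in $BV(\Omega)$. Writing $a_p:=\int_\Omega|\nabla u_p|^p$ and using $\int_\Omega|u_p|^p=1$, one has $a_p+\beta\int_{\partial\Omega}|u_p|^p=J_p(u_p)=\lambda(\Omega,p,\beta)\le C$. For $\beta\ge0$ the boundary term is nonnegative, so $a_p\le C$ directly; Hölder's inequality then gives $\int_\Omega|\nabla u_p|\le a_p^{1/p}|\Omega|^{1-1/p}$ and $\|u_p\|_{L^1(\Omega)}\le|\Omega|^{1-1/p}$, both bounded as $p\to1^+$. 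For $-1<\beta<0$ I would reproduce the trace argument of Theorem \ref{teo_ex_p=1}: applying \eqref{trace} to $|u_p|^p$ and then Young's inequality yields $\int_{\partial\Omega}|u_p|^p\le c_1 a_p+c_1(p-1)+c_2$, whence $a_p(1-|\beta|c_1)\le C+|\beta|(c_1(p-1)+c_2)$; choosing $c_1=1+\eps$ (admissible since $\partial\Omega\in C^1$) small enough that $|\beta|c_1<1$ bounds $a_p$, and hence $u_p$, uniformly in $BV(\Omega)$. Consequently, up to a subsequence, $u_p$ converges weak$^*$ in $BV(\Omega)$ to some $u\in BV(\Omega)$.

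It remains to check non-degeneracy and close the chain of inequalities. The main subtlety is that the normalization is in $L^p$ while the limit lives in $L^1$, so I must rule out loss of mass as $p\to1^+$. The $BV$ bound gives, via the Sobolev embedding, a uniform bound on $\|u_p\|_{L^{N/(N-1)}}$, and interpolating $1=\|u_p\|_{L^p}\le\|u_p\|_{L^1}^{1-\theta}\|u_p\|_{L^{N/(N-1)}}^{\theta}$ with $\theta=\theta(p)\to0$ forces $\liminf_p\|u_p\|_{L^1(\Omega)}\ge1$; since $u_p\to u$ in $L^1(\Omega)$ this gives $\|u\|_{L^1}\ge1$, so $u\not\equiv0$ and $u$ is admissible in \eqref{minJ}. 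Applying the liminf inequality \eqref{liminf} along the converging subsequence gives $\Lambda(\Omega,\beta)\le J(u)\le\liminf_p\lambda(\Omega,p,\beta)$, which together with the upper bound of the first step yields $\Lambda(\Omega,\beta)=\lim_p\lambda(\Omega,p,\beta)$ and $J(u)=\Lambda(\Omega,\beta)$, i.e. $u$ is a minimizer of \eqref{minJ}. Since this reasoning applies to every subsequence and the limit of $\lambda(\Omega,p,\beta)$ is thereby pinned to $\Lambda(\Omega,\beta)$, the full convergence follows and every weak$^*$ limit of $u_p$ is a minimizer. I expect the equicoercivity for $-1<\beta<0$ and the non-degeneracy argument to be the delicate points: the former because the negative boundary term must be absorbed by the trace inequality with constant close to $1$, the latter because the $L^p$-to-$L^1$ normalization could a priori let the mass vanish in the limit.
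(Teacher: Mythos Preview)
Your proof is correct and follows essentially the same route as the paper's: the upper bound via a recovery sequence for a minimizer of \eqref{minJ}, equicoercivity of the normalized minimizers (absorbing the negative boundary term via the trace inequality \eqref{trace} applied to $|u_p|^p$ and Young's inequality, exactly as in the paper), and then the liminf inequality to close the chain $\Lambda(\Omega,\beta)\le J(u)\le\liminf_p J_p(u_p)\le\Lambda(\Omega,\beta)$. You are in fact more careful than the paper on one point: you explicitly rule out $u\equiv 0$ by interpolating $\|u_p\|_{L^p}$ between $L^1$ and $L^{N/(N-1)}$ together with the uniform $BV$ bound, whereas the paper passes over this non-degeneracy issue in silence.
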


\begin{proof}
Let $\bar u\in BV(\Omega)$ be a minimizer of \eqref{minJ}.
We have that by Theorem \ref{gammaconvteo} there exists $w_{p}$ converging to $\bar u$ which satisfies \eqref{limsup};
on the other hand, we claim that $u_{p}$  weak* converges in $BV(\Omega)$ to a function $u\in BV(\Omega)$.
Indeed, being 
\[
\limsup_{p\to 1^{+}} J_{p}(u_{p}) \le \limsup_{p\to 1^{+}} J_{p}(w_{p}) \le J(\bar u)=\Lambda(\Omega,\beta),
\]
it holds that $J_{p}(u_{p})\le C$, for any $p>1$, where $C$ does not depend on $p$. This implies that $u_{p}$ is bounded in $BV(\Omega)$. This is obvious if $\beta \ge 0$, while if $\beta <0$ it is a consequence of trace inequality \eqref{trace} that
\[
\int_{\Omega} \left|\nabla u_{p}\right|^{p}dx \le C -\beta \int_{\Omega} 
\left|\nabla(u_{p}^{p})\right|-\beta c_{2},
\] 
and by \eqref{betaneg3} it holds
\[
(1+\beta) \int_{\Omega} \left|\nabla u_{p}\right|^{p}dx \le C-\beta C_{2}-\beta(p-1).
\]
Hence, by compactness we get the claim.

 Then it holds, by Theorem \ref{gammaconvteo} and the fact that $\lambda(\Omega,p,\beta)= J_{p}(u_{p})$, that
\begin{equation*}
\Lambda(\Omega,\beta)=J(\bar u) \le J(u)\le \liminf_{p\to 1^{+}} J_{p}(u_{p}) \le \Lambda(\Omega,\beta).
\end{equation*}
Hence \eqref{lim1} holds; moreover, $u$ is a minimizer of \eqref{minJ}.
\end{proof}

\section{An isoperimetric inequality for $\Lambda(\Omega,\beta)$}
\label{s_FK}

In this section we deal with the shape optimization problem; the essential tool is the study of the radial case. In particular we need an explicit computation for $\lambda$ in case of a ball of radius $R$. We first focus on some qualitative properties of the first eigenfunction associated to $\Lambda(\Omega,\beta)$ for any $p>1$. We state the following classical result:  
\begin{prop}
\label{propradiale}
Let  $p>1$, $\beta \in \R$ and let $\Omega=B_{R}$ be the ball in $\R^{N}$ centered at the origin with radius $R$. Let $v_{p}\in W^{1,p}(B_{R})$ be the first positive eigenfunction corresponding to $\lambda(B_{R},p,\beta)$. Then $v_{p}=\psi_{p}(r)\in C^{\infty}((0,R))\cap C^{1,\alpha}([0,R])$ is a radially symmetric function, which solves
\begin{equation*}
\begin{cases}
-\left|\psi_{p}'(r)\right|^{p-2}\left[(p-1)\psi_{p}''(r)+\dfrac{N-1}{r}\psi_{p}'(r)\right]=\lambda(B_{R},p,\beta)\psi_{p}^{p-1}(r) &\text{in }]0,R[,\\[.3cm]
\psi_{p}'(0)=0,\quad \left|\psi_{p}'(R)\right|=\left|\beta\right|^{\frac{1}{p-1}}\psi_{p}(R)
\end{cases}
\end{equation*}
with $\psi_{p}'<0$ in $]0,R]$ if $\beta>0$, $\psi_{p}'>0$ in $]0,R]$ if $\beta<0$.
\end{prop}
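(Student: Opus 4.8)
The plan is to reduce the boundary value problem to an ODE by exploiting the rotational symmetry of the ball together with the simplicity of the eigenvalue, to read off the boundary conditions from the $C^{1,\alpha}$ regularity and the Robin condition, and finally to obtain the monotonicity of $\psi_p$ from a single integration of the equation combined with the sign of $\lambda(B_R,p,\beta)$. First I would prove radial symmetry. Both the functional $J_p$ in \eqref{jp} and the normalization are invariant under the action of $O(N)$: for every orthogonal map $\mathcal{R}$ the function $v_p\circ\mathcal{R}$ belongs to $W^{1,p}(B_R)$, is positive, and satisfies $J_p(v_p\circ\mathcal{R})=J_p(v_p)=\lambda(B_R,p,\beta)$, so it is again a first positive eigenfunction. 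By the simplicity of $\lambda(B_R,p,\beta)$ established in Section \ref{s_firsteigen}, we have $v_p\circ\mathcal{R}=c_{\mathcal{R}}\,v_p$ for some constant $c_{\mathcal{R}}$; since $\mathcal{R}$ preserves the $L^p$ norm, $|c_{\mathcal{R}}|=1$, and since both sides are positive, $c_{\mathcal{R}}=1$. Letting $\mathcal{R}$ range over $O(N)$ forces $v_p$ to depend on $|x|$ alone, so we may write $v_p=\psi_p(r)$ with $r=|x|$.

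Next I would derive the ODE. For a radial function $u(x)=\psi(|x|)$ one has $\nabla u=\psi'(r)\,x/r$, whence at points where $\psi'\neq 0$
\[
\Delta_p u=\frac{1}{r^{N-1}}\frac{d}{dr}\Big(r^{N-1}|\psi'|^{p-2}\psi'\Big)=|\psi'|^{p-2}\Big[(p-1)\psi''+\tfrac{N-1}{r}\psi'\Big].
\]
Inserting this into $-\Delta_p v_p=\lambda(B_R,p,\beta)\,v_p^{p-1}$ (valid since $v_p>0$) yields the stated equation. The regularity $v_p\in C^{1,\alpha}([0,R])$ together with radial symmetry forces $\nabla v_p(0)=0$, i.e. $\psi_p'(0)=0$; on $\partial B_R$ the outer normal is $x/R$, so $\partial v_p/\partial\nu=\psi_p'(R)$ and $|\nabla v_p|=|\psi_p'(R)|$, and the Robin condition becomes $|\psi_p'(R)|^{p-2}\psi_p'(R)=-\beta\,\psi_p(R)^{p-1}$, which upon taking absolute values gives $|\psi_p'(R)|=|\beta|^{1/(p-1)}\psi_p(R)$.

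Finally I would settle the monotonicity, which is the heart of the statement. Writing the equation in divergence form, $\big(r^{N-1}|\psi_p'|^{p-2}\psi_p'\big)'=-\lambda(B_R,p,\beta)\,r^{N-1}\psi_p^{p-1}$, and integrating over $(0,r)$ — the boundary contribution at the origin vanishes because $\psi_p'(0)=0$ and $N\geq2$ — I obtain
\[
r^{N-1}|\psi_p'(r)|^{p-2}\psi_p'(r)=-\lambda(B_R,p,\beta)\int_0^r s^{N-1}\psi_p(s)^{p-1}\,ds.
\]
Since $\psi_p>0$, the sign of $\psi_p'(r)$ on $(0,R]$ is the opposite of that of $\lambda(B_R,p,\beta)$. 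Testing $J_p$ on constant functions shows $\lambda(B_R,p,\beta)\leq \beta\,P(B_R)/|B_R|$, so $\lambda<0$ when $\beta<0$; while for $\beta>0$ the numerator of $J_p$ is nonnegative and cannot vanish on a nonzero function, so $\lambda>0$. This gives $\psi_p'<0$ on $(0,R]$ when $\beta>0$ and $\psi_p'>0$ on $(0,R]$ when $\beta<0$. In particular $\psi_p'$ never vanishes on $(0,R)$, so there the equation is non-degenerate and a standard bootstrap yields $\psi_p\in C^\infty((0,R))$.

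The step demanding the most care is this last one: one must pin down the correct sign of $\lambda(B_R,p,\beta)$ in each regime and verify that the contribution of the boundary term at the origin genuinely disappears, since these two facts together are precisely what produces the strict, non-vanishing monotonicity. By contrast, the radial symmetry is a soft consequence of simplicity, and the interior smoothness is routine once $\psi_p'\neq0$ is known.
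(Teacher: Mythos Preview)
Your argument is correct. Note, however, that the paper does not actually supply a proof of this proposition: it is introduced with the phrase ``We state the following classical result'' and no proof follows, so there is nothing in the paper to compare your approach against. What you have written is essentially the standard proof one would expect for such a classical statement: radial symmetry from simplicity plus $O(N)$-invariance, the ODE from the radial form of the $p$-Laplacian, the boundary conditions from $C^{1,\alpha}$ regularity at the origin and the Robin condition at $r=R$, and the strict monotonicity from integrating the divergence-form equation together with the sign of $\lambda(B_R,p,\beta)$. The sign analysis for $\lambda$ (testing with constants for $\beta<0$, and nonnegativity plus nondegeneracy of the numerator for $\beta>0$) is exactly the right way to close the monotonicity step, and the bootstrap to $C^\infty$ on $(0,R)$ once $\psi_p'\neq 0$ is routine, as you say.
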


Let us state and prove an explicit computation for $\Lambda(\Omega,\beta)$ in case $\Omega$ is a ball. The proof uses classical tools and the reduction of the study of the minimum in \eqref{minJ} to the characteristic functions in case of positive $\beta$. When $\beta$ is negative the proof is more delicate and it strongly relies on the $\Gamma$-convergence proven in Theorem \ref{gammaconvteo}.   
Let us recall that, in the sequel, $h(\Omega)$ is the Cheeger constant for $\Omega$ as defined in \eqref{cheeger}.
\begin{prop}\label{prop_radial}
If $\beta> -1$, then
\begin{equation}
	\label{casopalla}
	\Lambda(B_{R},\beta)=\hat \beta h(B_{R})= \hat \beta \frac{N}{R},
\end{equation}
where $\hat \beta=\min\{\beta,1\}$.
\end{prop}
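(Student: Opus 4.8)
The plan is to reduce everything to characteristic functions and then split according to the sign of $\beta$. By Theorem \ref{teo_ex_p=1}, for every $\beta>-1$ we have $\Lambda(B_R,\beta)=\ell(B_R,\beta)=\inf_{E\subseteq B_R}R(E,\beta)$, so it suffices to minimise $R(E,\beta)$ over sets $E\subseteq B_R$. Testing with $E=B_R$ already yields the target value as an upper bound: since $P_{B_R}(B_R)=0$ while the trace of $\chi_{B_R}$ on $\partial B_R$ equals $1$, the boundary term is $\mathcal{H}^{N-1}(\partial B_R)=P(B_R)$, whence
\[
R(B_R,\beta)=\frac{\hat\beta\,P(B_R)}{|B_R|}=\hat\beta\,h(B_R)=\hat\beta\,\frac{N}{R},
\]
using \eqref{cheeger2}. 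Thus $\Lambda(B_R,\beta)\le \hat\beta\,N/R$ for every $\beta$, and the whole problem is to establish the matching lower bound $R(E,\beta)\ge \hat\beta\,N/R$ for all $E\subseteq B_R$.

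For $\beta\ge 0$ this is immediate. Here $\hat\beta\in[0,1]$, so $P_{B_R}(E)\ge \hat\beta\,P_{B_R}(E)$, and using the decomposition $P(E)=P_{B_R}(E)+\mathcal{H}^{N-1}(\partial B_R\cap\partial E)$ one obtains
\[
R(E,\beta)=\frac{P_{B_R}(E)+\hat\beta\,\mathcal{H}^{N-1}(\partial B_R\cap\partial E)}{|E|}\ge \hat\beta\,\frac{P(E)}{|E|}\ge \hat\beta\,h(B_R)=\hat\beta\,\frac{N}{R},
\]
where the last step is exactly the self-Cheeger property of the ball $h(B_R)=N/R$. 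This disposes of both $0\le\beta<1$ and $\beta\ge 1$ at once.

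For $-1<\beta<0$ the numerator of $R$ carries a negative boundary term, so I would use a trace-type inequality obtained from the Gauss-Green formula. Applying it to the field $X(x)=x$ on the finite-perimeter set $E$ (so $\operatorname{div}X=N$), and observing that at $\mathcal{H}^{N-1}$-a.e.\ point of $\partial^*E\cap\partial B_R$ the outer normal is $\nu_E=x/R$, so that $X\cdot\nu_E=R$, while $|X\cdot\nu_E|\le R$ on the interior reduced boundary $\partial^*E\cap B_R$, gives
\[
N|E|=\int_{\partial^*E}x\cdot\nu_E\,d\mathcal{H}^{N-1}\ge R\,\mathcal{H}^{N-1}(\partial B_R\cap\partial E)-R\,P_{B_R}(E),
\]
that is $\mathcal{H}^{N-1}(\partial B_R\cap\partial E)\le \frac{N}{R}|E|+P_{B_R}(E)$. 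Multiplying by $\beta=-|\beta|$ (which reverses the inequality) and crucially using $|\beta|<1$, precisely the threshold $\beta>-1$, yields
\[
P_{B_R}(E)+\beta\,\mathcal{H}^{N-1}(\partial B_R\cap\partial E)\ge (1-|\beta|)\,P_{B_R}(E)+\beta\,\frac{N}{R}|E|\ge \beta\,\frac{N}{R}|E|,
\]
so $R(E,\beta)\ge \beta\,N/R=\hat\beta\,N/R$, with equality at $E=B_R$.

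The main obstacle is precisely this negative-$\beta$ case: one must justify the Gauss-Green step for an arbitrary competitor of finite perimeter and, in particular, identify the trace integral $\int_{\partial B_R}|\chi_E|$ entering $R(E,\beta)$ with $\mathcal{H}^{N-1}(\partial^*E\cap\partial B_R)$. A more robust alternative, and the one for which the preceding $\Gamma$-convergence results are tailored, is to bypass the direct $BV$ computation: by Corollary \ref{gammaconvcor} one has $\Lambda(B_R,\beta)=\lim_{p\to1^+}\lambda(B_R,p,\beta)$, and one computes this limit from the radial ODE of Proposition \ref{propradiale}, tracking the first eigenfunction $\psi_p$ and its eigenvalue as $p\to1^+$. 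Either way, combining the upper and lower bounds gives $\Lambda(B_R,\beta)=\hat\beta\,h(B_R)=\hat\beta\,N/R$.
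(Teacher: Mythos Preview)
Your proof is correct. For $\beta\ge 0$ it coincides with the paper's argument (reduction to characteristic functions, $P(E)=P_{B_R}(E)+\mathcal H^{N-1}(\partial E\cap\partial B_R)$, and the self-Cheeger property of the ball). For $-1<\beta<0$, however, you take a genuinely different route from the paper.

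The paper does \emph{not} bound $R(E,\beta)$ from below over arbitrary competitors. Instead it invokes Corollary~\ref{gammaconvcor} and Proposition~\ref{propradiale}: the positive $p$-eigenfunctions $v_p=\psi_p(|x|)$ on $B_R$ are radially \emph{increasing} when $\beta<0$, hence their $BV$ limit $v$ is radially nondecreasing and its superlevel sets are spherical shells $\{r<|x|<R\}$. By \eqref{char3} one then has $\Lambda(B_R,\beta)=R(\{r<|x|<R\},\beta)=\dfrac{N}{R}\cdot\dfrac{(r/R)^{N-1}+\beta}{1-(r/R)^{N}}$ for some $r\in[0,R)$, and an elementary calculus check shows that $f(t)=(t^{N-1}+\beta)/(1-t^{N})$ attains its minimum on $[0,1)$ at $t=0$, giving $\beta N/R$.

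Your approach instead gives a \emph{uniform} lower bound $R(E,\beta)\ge \beta N/R$ for \emph{every} finite-perimeter $E\subseteq B_R$, via the De Giorgi--Gauss--Green formula applied to the identity field. This is more elementary and self-contained: it uses neither the $\Gamma$-convergence of Section~\ref{s_Gamma} nor any information about $p$-eigenfunctions, and the identification $\nu_E=x/R$ $\mathcal H^{N-1}$-a.e.\ on $\partial^*E\cap\partial B_R$ (together with the matching of the $BV$ trace of $\chi_E$ with $\mathcal H^{N-1}(\partial^*E\cap\partial B_R)$) is standard for $C^1$ boundaries. What the paper's argument buys, by contrast, is that it explains \emph{why} the ball itself is the optimal level set: the limiting eigenfunction forces the competition down to shells, which is conceptually consonant with the $\Gamma$-convergence framework developed earlier and justifies that machinery.
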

\begin{proof}
We split the proof in two cases with respect to the sign of $\beta$. 
\medskip

If $\beta \ge 0$, the equality \eqref{casopalla} follows directly from \eqref{char0} and from the isoperimetric inequality. Indeed if $E\subseteq B_{R}$, then
\begin{equation}
\begin{aligned}
\label{cheegerpalla}
R(E,\beta)&=
\frac{P_{B_{R}}(E) + \hat \beta 
\mathcal{H}^{N-1}(\partial B_{R} \cap \partial E)}{|E|}  
\\
&\ge \hat\beta
\frac{P(E)}{\left|E\right|} \ge 
\hat \beta \frac{ P(E^{\#}) }{|E^{\#}|} \ge 
\hat \beta \frac{ P(B_{R}) }{|B_{R}|}=\hat \beta\frac{N}{R},
\end{aligned}
\end{equation}
where the last inequality in the previous holds from the definition and the ball's property of being self-Cheeger given by \eqref{cheeger} and \eqref{cheeger2}. This shows that $\Lambda(B_{R},\beta)\ge \hat \beta\frac{N}{R}$. For the reverse inequality one can simply choose $E=B_{R}$ in \eqref{elle} which gives that \eqref{casopalla} holds.

\medskip

The proof of case $-1<\beta<0$ is more involved and it makes use of the $\Gamma$-convergence of the functional and the Proposition \ref{propradiale}. 

Let $v_{p}\in W^{1,p}(B_{R})$ be a positive minimizer of \eqref{eigjp}. Then it follows from  Corollary \ref{gammaconvcor} that
\[
\Lambda(B_{R},\beta)=\lim_{p\to 1^{+}}J_{p}(v_{p}).
\]
By Proposition \ref{propradiale}, $v_{p}(x)=\psi_{p}(r)$, $r=\left|x\right|$, are radially increasing functions. Without loss of generality, we may suppose $\psi_{p}(R)=1$. 

Moreover, $v_{p}$ converges strongly in $L^{1}(B_{R})$ to $v\in BV(B_{R})$, almost everywhere in $B_{R}$ and weak$^{*}$ in $BV(B_{R})$ as $p\to 1^+$. Being $v_{p}$ radially increasing, so $v$ is nondecreasing. 
 Hence, its superlevel sets $\{v>t\}$ are $N$-dimensional spherical shells $\{r<\left|x\right|<R\}$, and by \eqref{char3} it holds that
\[
\Lambda(B_{R},\beta)=\frac{N}{R}\cdot\frac{\left(\frac{r}{R}\right)^{N-1}+\beta}{1-\left(\frac{r}{R}\right)^{N}}.
\]
for some $r\in [0,R[$. Then minimizing the function
\[
f(t)=\frac{t^{N-1}+\beta}{1-t^{N}},\quad t\in [0,1[
\]
it is easy to see that the minimum is attained at $t=0$, and this gives \eqref{casopalla}.
\end{proof}

Now we are in position to state and prove the above cited inequalities. The proof consists in a suitable use of the explicit computation of $\lambda$ given in Proposition \ref{prop_radial}. We give the following result:
\begin{theorem}\label{teo_fabkra}
It holds 
\begin{equation}
\label{fk1}
\Lambda(\Omega\diesis,\beta) \le \Lambda(\Omega,\beta)\quad \text{if }\beta\ge 0,
\end{equation}
and
\begin{equation}
\label{fk2}
\Lambda(\Omega\diesis,\beta)\ge \Lambda(\Omega,\beta)\quad \text{if }-1<\beta<0.
\end{equation}
\end{theorem}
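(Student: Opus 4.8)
The plan is to reduce both inequalities to the geometric characterization $\Lambda(\Omega,\beta)=\ell(\Omega,\beta)=\inf_{E\subseteq\Omega}R(E,\beta)$ furnished by Theorem \ref{teo_ex_p=1}, combined with the explicit value for the ball from Proposition \ref{prop_radial}. Write $\Omega\diesis=B_{\rho}$, so that $|B_{\rho}|=|\Omega|$ and, by Proposition \ref{prop_radial}, $\Lambda(\Omega\diesis,\beta)=\hat\beta\,\tfrac{N}{\rho}=\hat\beta\,\tfrac{P(B_{\rho})}{|B_{\rho}|}$ with $\hat\beta=\min\{\beta,1\}$. Everything then hinges on testing $R(\cdot,\beta)$ against well-chosen competitors and on the classical isoperimetric inequality $P(E)\ge P(E\diesis)$.

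For $\beta\ge 0$ I fix an arbitrary set $E\subseteq\Omega$ of finite perimeter and split the full perimeter as $P(E)=P_{\Omega}(E)+\mathcal{H}^{N-1}(\partial E\cap\partial\Omega)$. Since $0\le\hat\beta\le 1$ gives $P_{\Omega}(E)\ge\hat\beta\,P_{\Omega}(E)$, the isoperimetric inequality yields
\[
R(E,\beta)=\frac{P_{\Omega}(E)+\hat\beta\,\mathcal{H}^{N-1}(\partial E\cap\partial\Omega)}{|E|}\ge\hat\beta\,\frac{P(E)}{|E|}\ge\hat\beta\,\frac{P(E\diesis)}{|E\diesis|}=\hat\beta\,\frac{N}{r_{E}},
\]
where $r_{E}$ is the radius of $E\diesis$. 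Because $|E|\le|\Omega|=|B_{\rho}|$ we have $r_{E}\le\rho$, hence $R(E,\beta)\ge\hat\beta\,\tfrac{N}{\rho}=\Lambda(\Omega\diesis,\beta)$. Taking the infimum over $E$ and using $\Lambda(\Omega,\beta)=\ell(\Omega,\beta)$ gives \eqref{fk1}.

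For $-1<\beta<0$ the decisive idea is that the negative boundary weight makes a large boundary contribution favourable, so the right competitor is the whole domain $E=\Omega$. Since $P_{\Omega}(\Omega)=0$ and $\partial\Omega\cap\partial\Omega=\partial\Omega$, one has $R(\Omega,\beta)=\beta\,\tfrac{P(\Omega)}{|\Omega|}$, and as $\beta<0$ the isoperimetric inequality $P(\Omega)\ge P(B_{\rho})$ reverses into $\beta\,P(\Omega)\le\beta\,P(B_{\rho})$. Consequently
\[
\Lambda(\Omega,\beta)=\ell(\Omega,\beta)\le R(\Omega,\beta)=\beta\,\frac{P(\Omega)}{|\Omega|}\le\beta\,\frac{P(B_{\rho})}{|B_{\rho}|}=\beta\,\frac{N}{\rho}=\Lambda(\Omega\diesis,\beta),
\]
which is \eqref{fk2}.

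I expect the only conceptually delicate point to be the negative case: one must realize that testing with $E=\Omega$ already defeats the ball and that the sign of $\beta$ flips the isoperimetric inequality into the desired direction, which is precisely why the ball becomes a maximizer rather than a minimizer. One should also check that Theorem \ref{teo_ex_p=1} is genuinely available here, i.e.\ that the assumed regularity of $\partial\Omega$ (of class $C^1$ when $-1<\beta<0$, per Remark \ref{rem_reg}) secures the reduction $\Lambda(\Omega,\beta)=\ell(\Omega,\beta)$; granting that, the perimeter splitting and the isoperimetric step are entirely routine.
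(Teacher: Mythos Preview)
Your proof is correct and follows essentially the same route as the paper's own argument: for $\beta\ge 0$ both you and the paper bound $R(E,\beta)$ from below via the chain $R(E,\beta)\ge\hat\beta\,P(E)/|E|\ge\hat\beta\,P(E\diesis)/|E\diesis|\ge\hat\beta\,N/\rho$ (the paper refers back to \eqref{cheegerpalla}), and for $-1<\beta<0$ both test with $E=\Omega$ and flip the isoperimetric inequality using the sign of $\beta$. Your write-up is in fact slightly more explicit about the perimeter splitting and the step $r_E\le\rho$, but the underlying ideas are identical.
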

\begin{proof}

We split the proof depending on the sign of $\beta$. 
In case $\beta \ge 0$, arguing as in \eqref{cheegerpalla} and if $E\subseteq \Omega$, one can write
\[
R(E,\beta) \ge  \Lambda(\Omega\diesis,\beta) =  \hat \beta  \frac{P(\Omega^{\#})}{\left|\Omega^{\#}\right|}
\]
from which it simply follows \eqref{fk1}.

\medskip

If $-1< \beta<0$ then one can use the isoperimetric inequality and \eqref{casopalla} in order to deduce:
\[
\Lambda(\Omega,\beta) \le \beta \frac{P(\Omega)}{\left|\Omega\right|} \le \beta \frac{P(\Omega^{\#})}{\left|\Omega^{\#}\right|}=\Lambda(\Omega\diesis,\beta),
\]
that is \eqref{fk2}.
\end{proof}

\section{A Cheeger-type inequality for p-Laplace with Robin conditions}
\label{s_Cheeger}
As final remark, we derive an estimate for positive $\beta$ (see also \cite{gt} for a similar inequality). This is a generalization, for Robin boundary conditions, of the well-known Cheeger inequality in the Dirichlet ($\beta=+\infty)$ case (see e.g. \cite{kf})
\begin{lemma}\label{cheegertype}
	Let $\beta > 0$, $p>1$ and $\Omega$ Lipschitz bounded open set. Then 
	\[
	\lambda(\Omega,p,\beta) \ge \textcolor{red}{\Lambda(\Omega,\beta)}\tilde \beta -(p-1)\tilde \beta^{\frac{p}{p-1}}
	\]
	where $\tilde\beta=\max\{1,\beta\}$.
Moreover if $\beta\ge \left(\frac{h(\Omega)}{p}\right)^{p-1}$ then  
\[
\lambda(\Omega,p,\beta) \ge \left(\frac{h(\Omega)}{p}\right)^p.
\]
\end{lemma}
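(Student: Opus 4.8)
The plan is to derive both estimates from a single device. Given a test function $u\in W^{1,p}(\Omega)$ — which I may take nonnegative, since $J_p(|u|)\le J_p(u)$ and hence $\lambda(\Omega,p,\beta)=\inf_{u\ge 0}J_p(u)$ — I would set $v:=u^p$ and compare the numerator of $J_p(u)$ with the numerator of the relevant $p=1$ functional evaluated at $v$. The bridge is a parametrised Young inequality: for every $t>0$,
\[
|\nabla v| = p\,u^{p-1}|\nabla u| \le t^{p}|\nabla u|^p + (p-1)t^{-p'}u^{p},\qquad p'=\tfrac{p}{p-1},
\]
so that $\int_\Omega|\nabla v|\le t^{p}\int_\Omega|\nabla u|^p+(p-1)t^{-p'}\int_\Omega u^{p}$, while on the boundary $v=u^{p}$ in the trace sense and $\int_\Omega v=\int_\Omega u^p$. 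By H\"older's inequality $v\in W^{1,1}(\Omega)\subset BV(\Omega)$, so it is an admissible competitor for the functionals below.

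For the first inequality I would test the definition of $\Lambda(\Omega,\beta)$ with $v$, which gives $\Lambda(\Omega,\beta)\int_\Omega u^p\le \int_\Omega|\nabla v|+\hat\beta\int_{\partial\Omega}u^p$ with $\hat\beta=\min\{\beta,1\}$. Inserting the Young estimate and then choosing the parameter so that $t^p=1/\tilde\beta$ is the key step: since $\hat\beta\tilde\beta=\beta$ for $\beta>0$, this choice forces $\hat\beta=t^p\beta$, so the boundary term becomes exactly $t^p\beta\int_{\partial\Omega}u^p$ and the right-hand side collapses to $t^p\big(\int_\Omega|\nabla u|^p+\beta\int_{\partial\Omega}u^p\big)+(p-1)t^{-p'}\int_\Omega u^p$. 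Multiplying by $t^{-p}=\tilde\beta$ and using $t^{-(p+p')}=\tilde\beta^{p/(p-1)}$ yields $\tilde\beta\,\Lambda(\Omega,\beta)-(p-1)\tilde\beta^{p/(p-1)}\le J_p(u)$, and taking the infimum over $u$ gives the first claim.

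For the second inequality I would instead test the Cheeger constant: extending $v$ by zero, $h(\Omega)\int_\Omega u^p\le |Dv|(\R^N)=\int_\Omega|\nabla v|+\int_{\partial\Omega}u^p$. Here the boundary coefficient is $1$, so to absorb it into $t^p\beta\int_{\partial\Omega}u^p$ I need $t^p\beta\ge 1$. After inserting the Young estimate the resulting lower bound for $J_p(u)$ is $s\,h(\Omega)-(p-1)s^{p/(p-1)}$ with $s=t^{-p}$; the natural choice is the maximiser of this concave one-variable function, namely $s^\ast=(h(\Omega)/p)^{p-1}$, at which the bound evaluates to $(h(\Omega)/p)^p$. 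Its feasibility, i.e. $t^p\beta=\beta/s^\ast\ge 1$, is precisely the hypothesis $\beta\ge (h(\Omega)/p)^{p-1}$, whence $\lambda(\Omega,p,\beta)\ge (h(\Omega)/p)^p$.

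The computations are routine once the parameter is fixed; the only genuine points requiring care are the admissibility of $v=u^p$ as a $BV$ competitor, so that the definitions of $\Lambda(\Omega,\beta)$ and of $h(\Omega)$ can be applied, and the two parameter choices. The first, $t^p=1/\tilde\beta$, is dictated by the identity $\hat\beta\tilde\beta=\beta$, which matches the gradient and boundary coefficients; the second is the optimiser of a concave function whose feasibility is exactly the stated threshold on $\beta$. I expect the main obstacle to be nothing deeper than keeping the exponents $p$, $p'=p/(p-1)$ and $p+p'=p^2/(p-1)$ consistent when collecting terms.
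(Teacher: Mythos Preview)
Your argument is correct and complete; both inequalities follow exactly as you outline, and the algebra with $t^p=1/\tilde\beta$, $\hat\beta\tilde\beta=\beta$, and $t^{-(p+p')}=\tilde\beta^{p/(p-1)}$ checks out, as does the optimisation in $s$ for the second estimate.

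Your route, however, is genuinely different from the paper's. The paper works with the positive \emph{eigenfunction} $\varphi_p$ and invokes the level-set machinery of Bucur--Daners: for the superlevel sets $U_t=\{\varphi_p>t\}$ one has $\lambda(\Omega,p,\beta)\ge H(U_t,\psi)$ for a one-parameter family of functionals $H$ depending on an auxiliary function $\psi$, and the two claims follow by taking $\psi\equiv\tilde\beta$ and $\psi\equiv(h(\Omega)/p)^{p-1}$ respectively. Your approach, by contrast, is entirely elementary and self-contained: it works with an arbitrary competitor $u$, passes to $v=u^p$, applies a pointwise Young inequality, and tests $v$ directly in the variational definitions of $\Lambda(\Omega,\beta)$ and $h(\Omega)$. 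What this buys you is independence from the external results in \cite{buda} and a proof that makes the role of the threshold $\beta\ge(h(\Omega)/p)^{p-1}$ transparent as a feasibility constraint on the Young parameter. What the paper's approach buys is a direct connection to the level-set framework used elsewhere in the Robin Faber--Krahn literature, and it yields as a byproduct the information that equality in $\lambda\ge H(U_t,\psi)$ holds for the specific choice $\psi=(|\nabla\varphi_p|/\varphi_p)^{p-1}$.
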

\begin{proof}
 Let $\varphi_p$ be a positive minimizer of \eqref{eigjp} 
 and let us denote by
	\[
	U_t=\{x\in \Omega: \varphi_p(x)>t\},
	\]
	\[
	S_t=\{x\in \Omega: \varphi_p(x)=t\},
	\]
	\[
	\Gamma_t=\{x\in \partial\Omega: \varphi_p(x)>t\}.
	\]
	For a continuous function $\psi$ we also define the following functional 
	\[
	H(U_t,\psi) = \frac{1}{|U_t|}\left(\int_{S_t}\psi d\mathcal{H}^{N-1} + \int_{\Gamma_t}\beta d\mathcal{H}^{N-1} - (p-1)\int_{U_t}\psi^{\frac{p}{p-1}} \right).
	\]
	From Proposition $2.3$ and Theorem $3.2$ of \cite{buda}, it follows that there exists a set $S\subseteq (0,1)$ of positive measure such that
	\begin{equation}\label{stimabuda}
		\lambda(\Omega,p,\beta) \ge H(U_t,\psi)	
	\end{equation}
	for all $t\in S$. 
	We also mention that the choice $\psi = \left(\frac{|\nabla \varphi_p|}{\varphi_p}\right)^{p-1}$ gives the equality in \eqref{stimabuda}. 
	\\
	Then choosing $\psi=\tilde\beta$ in \eqref{stimabuda} one gets
	\begin{equation*}
		\lambda(\Omega,p,\beta) \ge H\left(U_t,\psi\right) \ge
		\begin{cases}
		 h(\Omega)\beta-(p-1)\beta^{\frac{p}{p-1}}&\text{if }\beta \ge 1,\\
		 \ds\min_{E\subseteq\Omega}\frac{P_{\Omega}(E)+\beta\mathcal H^{n-1}(\de E\cap \de\Omega)}{\left|E\right|}-(p-1). &\text{if }\beta <1.
		\end{cases}		
	\end{equation*}
	where $h(\Omega)$ is the Cheeger constant for $\Omega$. Recalling that if $\beta\ge 1$ then $\Lambda(\Omega,\beta)=h(\Omega)$, we have
	\[
	\lambda(\Omega,p,\beta) \ge \lambda(\Omega,1,\beta)\tilde\beta -(p-1)\tilde\beta^{\frac{p}{p-1}}.
	\]
	Furthermore, if $\displaystyle \beta\ge \left(\frac{h(\Omega)}{p}\right)^{p-1}$ one gets that
	\[
	\lambda(\Omega,p,\beta) \ge H\left(U_t,\frac{h(\Omega)^{p-1}}{p^{p-1}}\right) \ge 
	\frac{h(\Omega)^p}{p^{p-1}} - (p-1)\frac{h(\Omega)^p}{p^p} = \left(\frac{h(\Omega)}{p}\right)^p,
	\]
	which concludes the proof.
\end{proof}

\section*{Acnowledgements}
This work has been partially supported by PRIN project 2017JPCAPN (Italy) grant: ``Qualitative and quantitative aspects of nonlinear PDEs'', PON Ricerca e Innovazione 2014-2020, by the FRA Project (Compagnia di San Paolo and Universit\`a degli studi di Napoli Federico II) \verb|000022--ALTRI_CDA_75_2021_FRA_PASSARELLI|, and by GNAMPA of INdAM.

\end{document}